\theoremstyle{definition} 
\newtheorem{definition}{Definition}[section]
\newtheorem{example}[definition]{Example}
\theoremstyle{remark} 
\newtheorem{remark}[definition]{Remark}
\theoremstyle{plain}  
\newtheorem{theorem}[definition]{Theorem}
\newtheorem{lemma}[definition]{Lemma}
\newtheorem{corollary}[definition]{Corollary}
\newcommand{\nn}{\mathbb N}
\newcommand{\qq}{\mathbb Q}
\newcommand{\rr}{\mathbb R}
\newcommand{\M}{\mathscr M}
\newcommand{\E}{\mathbb{E}}
\newcommand{\PP}{\mathbb{P}}
\newcommand{\C}{\operatorname c}
\newcommand{\e}{\mathrm{e}}
\numberwithin{equation}{section}
\newtheorem*{theorem*}{Theorem}
\DeclareMathOperator*{\argmin}{arg\, min}
\newcommand{\auf}{[\![}
\newcommand{\zu}{]\!]}
\definecolor{myteal}{RGB}{0 123 137}
\begin{document}
\title{Model-free filtering in high dimensions\\
via projection and score-based diffusions}
\author{Sören Christensen\footnote{Kiel University, Department of Mathematics, Kiel, Germany,
\href{mailto:christensen@math.uni-kiel.de}{christensen@math.uni-kiel.de}}\quad
Jan Kallsen\footnote{Kiel University, Department of Mathematics, Kiel, Germany, \href{mailto:kallsen@math.uni-kiel.de}{kallsen@math.uni-kiel.de}}\quad
Claudia Strauch\footnote{Heidelberg University, Institute of Mathematics, Heidelberg, Germany,
\href{mailto:strauch@math.uni-heidelberg.de}{strauch@math.uni-heidelberg.de}}\quad Lukas Trottner\footnote{University of Stuttgart, Department of Mathematics, Stuttgart, Germany,
\href{mailto:Lukas.trottner@isa.uni-stuttgart.de}{lukas.trottner@isa.uni-stuttgart.de}}}
\date{\today}
\maketitle
\begin{abstract}
We consider the problem of recovering a latent signal $X$ from its noisy observation $Y$. The unknown law $\PP^X$ of $X$,
and in particular its support $\M$, are
accessible only through a large sample of i.i.d.\ observations.
We further assume
$\M$ to be a low-dimensional submanifold of a high-dimensional Euclidean space $\rr^d$. As a filter or denoiser $\widehat X$,
we suggest an estimator of the metric projection $\pi_\M(Y)$ of $Y$ onto the manifold $\M$.
To compute this estimator, we study an auxiliary semiparametric model in which $Y$ is obtained 
by adding isotropic Laplace noise to $X$. Using score matching within a corresponding diffusion model, we obtain an estimator of the Bayesian posterior $\PP^{X \mid Y}$
in this setup. Our main theoretical results show that, in the limit of high dimension $d$, this posterior $\PP^{X\mid Y}$
is concentrated near the desired metric projection $\pi_\M(Y)$.\\

Keywords: denoising, filtering, manifold learning, diffusion models, score matching,
exponential killing, unsupervised learning\\

MSC 2020: 62C10, 62G20; 62G05, 60J65, 68T07\\
\end{abstract}

\section{Introduction}
Recovering a latent signal $X$ from its noisy observation $Y$ is a fundamental and ubiquitous task in modern statistics,
central to fields such as nonparametric regression, denoising, filtering, and inverse problems.
We propose a data-driven strategy, based solely on an i.i.d.\
sample $x_1, \dots, x_n$ drawn from the marginal distribution $\PP^X$. We assume that the support $\M$ of $\PP^X$ is a
low-dimensional submanifold of a high-dimensional Euclidean space $\rr^d$. Apart from that, no structural information about $\M$ beyond the sample is available,
as is common in modern applications where only high-dimensional unlabeled data are accessible.

From a Bayesian perspective, as in filtering, one may view the hidden $X$ as a random parameter whose  posterior law given $Y$ is to be determined.
This, however, requires assumptions on the kind of perturbation, that is, on the conditional law of $Y$ given $X$. The same applies to the maximum-likelihood approach in filtering, which leads to the Viterbi algorithm. 
Our method aims to avoid any assumption on the precise dependence between $X$ and $Y$.
Instead, our approach is motivated by a geometric perspective: we seek the point on the support $\M$ of $\PP^X$ that lies closest to the observed $Y$. 
Since $\M$ is unknown and represented only by the sample $x_1, \dots, x_n$, the statistical task can be viewed as one of manifold learning---namely, estimating the projection $\pi_\M(Y)$ of $Y$ onto $\M$ using only the empirical data.

\paragraph{Contribution of this paper}
To explain our reconstruction strategy, we begin by formalising the problem. 
The objective is to denoise a perturbed input $y \in \rr^d$, interpreted as a realisation of a random variable $Y$.
This observation is assumed to be a noisy version of some unperturbed signal $x \in \rr^d$, drawn from a random variable $X$ with law $\alpha \coloneq  \PP^X$.
The distribution $\alpha$ represents the full population of clean data---for example, the theoretical law of digit images in the example discussed in Section~\ref{su:bilder}.

We assume that $\alpha$ is supported on a polar set $\M\subset\rr^d$, i.e.\ a set that is almost surely avoided by a Brownian motion.
This class includes, for example, all Lipschitz submanifolds of codimension at least two, see \cite[Section 2.4.2]{evans2018measure} and \cite[Chapter 8]{morters2010brownian}, which we adopt as a guiding intuition throughout the paper. The ambient dimension $d$ is assumed to be large, as commonly encountered in modern data settings such as image processing \cite{pope21}.
While the distribution $\alpha$ is unknown, we assume that i.i.d.\ realisations $x_1, \dots, x_n$ of $X$ are available, 
that is, a sample of $n$ unperturbed images in our specific application. 
No information is provided about how the raw data $X$ are perturbed to produce $Y$; 
in other words, the conditional law $\PP^{Y \mid X}$ is unknown, and no dataset of paired realisations $(X, Y)$, or at least of $Y$ alone, is assumed to be given.

As mentioned above, we suggest to estimate the unperturbed $x$ corresponding to the observed noisy input $y$ by the closest element of the support $\M$ of $\alpha$, that is, by the minimiser of the distance $x\mapsto |x-y|$ from $y$. 
This can be viewed as a nearest-neighbour approach in which, however, the aim is to select $x$ 
from the ``true'' manifold $\M$ of all unperturbed images rather than from the limited set of observed samples $x_1, \dots, x_n$. 
Since $\M$ must be inferred from these data, this constitutes a non-trivial problem, which might appear even more challenging in high dimensions $d$. 
Surprisingly, however, we encounter a ``blessing'' rather than a curse of dimensionality in this context: a concentration-of-measure phenomenon ensures that our task can be addressed using a randomised algorithm whose output becomes almost deterministic in high dimensions.

To this end, we consider an auxiliary problem in which the conditional law $\PP^{Y \mid X}$ is supposed to be explicitly given. More specifically, we assume that
\begin{equation}\label{e:noidea}
Y=X+\sigma\sqrt{U}V,
\end{equation}
where $X,U,V$ are independent, $U$ is exponentially distributed with some parameter 1, $V$ is standard Gaussian in $\rr^d$, and $\sigma>0$ denotes a scaling parameter. 
In Section~\ref{s:killed}, we present an algorithm which, given a realisation $y$ of $Y$, 
aims to sample from the posterior law $\PP^{X \mid Y = y}$. 
It relies on interpreting the observation $Y$ in \eqref{e:noidea} as the terminal value of a Brownian motion started at $X$
and stopped at an exponentially distributed time.
In this setting, the posterior distribution $\PP^{X \mid Y=y}$ can be sampled efficiently using a time-homogeneous diffusion process with a stochastic terminal time,
whose drift coefficient is trained via score matching (see Section~\ref{s:killed}). 
Our analysis builds upon classical results for time-reversed Markov processes, 
providing a rigorous treatment of score-based generative models with random time horizons.

\begin{remark}
The model \eqref{e:noidea} reflects the lack of specific information about the noise mechanism through the isotropy of the additive noise $\sigma \sqrt{U} V$.
One may wonder why a random factor $\sqrt{U}$ is introduced.  
Note, however, that $|V|^2 / d = \tfrac{1}{d} \sum_{i=1}^d V_i^2 \approx 1$ for large~$d$ by the law of large numbers. 
Hence, a non-deterministic factor is required in order to obtain a noise term with random magnitude.
\end{remark}

While this auxiliary problem may or may not be considered as interesting in its own right, the key observation is that in high dimensions, geometry dominates. 
When $d$ is large, the posterior distribution $\PP^{X \mid Y = y}$ becomes highly concentrated in the region of $\M$ closest to the observation~$y$. In other words, the sampling algorithm can be viewed as an implementation of our above ``generalised nearest-neighbour problem''. Indeed, the following informal version of Theorem~\ref{t:nn}
states that the posterior concentrates around the (possibly set-valued) metric projection of $y$ onto the support $\M$ of $\PP^X$ at an exponential rate in $d$.
\begin{theorem*}
Let $\delta,\varepsilon>0$, and fix an observation $y\in\rr^d$.
If the prior $\alpha=\PP^X$ assigns at least mass $\varepsilon$ to some ball $B(y,r)$ with radius $r>0$ around $y$, then
\[\PP^{X \mid Y=y}\bigl(\M\cap B(y,(1+\delta)r)\bigr)\;\ge\; 1-\frac{1}{\varepsilon}(1+\delta)^{\,2-d}.\]
\end{theorem*}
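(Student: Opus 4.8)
The plan is to reduce the statement, via Bayes' theorem, to a one‑dimensional comparison of the radial noise kernel. First I would note that, since $X$ is independent of $(U,V)$, the conditional law of $Y$ given $X=x$ has Lebesgue density $y\mapsto g(|y-x|)$, where
\[
g(\rho)\;\coloneq\;\int_0^\infty (2\pi\sigma^2 u)^{-d/2}\exp\!\Bigl(-\frac{\rho^2}{2\sigma^2 u}-u\Bigr)\mathrm d u,\qquad\rho>0,
\]
is the radial profile of the isotropic Laplace‑type noise $\sigma\sqrt U V$ (a Gaussian scale mixture). Consequently, for every $y$ with $p_Y(y)\coloneq\int g(|x-y|)\,\alpha(\mathrm d x)\in(0,\infty)$ — which holds for Lebesgue‑a.e.\ $y$, as $p_Y$ is the convolution of $\alpha$ with an $L^1$ function — Bayes' theorem yields that $\PP^{X\mid Y=y}$ has $\alpha$‑density $x\mapsto g(|x-y|)/p_Y(y)$; in particular it is carried by $\M=\operatorname{supp}\alpha$, and $\PP^{X\mid Y=y}(A)=p_Y(y)^{-1}\int_A g(|x-y|)\,\alpha(\mathrm d x)$ for all Borel $A$.

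The key step is a scaling identity for $g$: the substitution $u=\rho^2 s$ gives
\[
g(\rho)\;=\;\rho^{\,2-d}\,\tilde h(\rho),\qquad \tilde h(\rho)\;\coloneq\;\int_0^\infty (2\pi\sigma^2 s)^{-d/2}\exp\!\Bigl(-\frac{1}{2\sigma^2 s}-\rho^2 s\Bigr)\mathrm d s,
\]
and $\tilde h$ is strictly positive and \emph{nonincreasing} on $(0,\infty)$ because $\rho\mapsto\rho^2 s$ is increasing for each fixed $s>0$. (In Bessel form this reads $g(\rho)\propto\rho^{1-d/2}K_{d/2-1}(\sqrt2\,\rho/\sigma)$, but only the elementary factorisation is needed.) Since $d\ge2$, the factor $\rho\mapsto\rho^{2-d}$ is also nonincreasing, hence so is $g$ on $(0,\infty)$.

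With this in hand the rest is bookkeeping. Fix $r>0$ with $\alpha(B(y,r))\ge\varepsilon$ and set $C\coloneq\rr^d\setminus B\bigl(y,(1+\delta)r\bigr)$. Restricting the integral defining $p_Y(y)$ to $B(y,r)$ and using $g(|x-y|)\ge g(r)$ there gives $p_Y(y)\ge\varepsilon\,g(r)$; since $\alpha$ is a probability measure and $g(|x-y|)\le g\bigl((1+\delta)r\bigr)$ on $C$, one gets $\int_C g(|x-y|)\,\alpha(\mathrm d x)\le g\bigl((1+\delta)r\bigr)$. Combining these with the scaling identity and $\tilde h\bigl((1+\delta)r\bigr)\le\tilde h(r)$,
\[
\PP^{X\mid Y=y}(C)\;\le\;\frac{g\bigl((1+\delta)r\bigr)}{\varepsilon\,g(r)}\;=\;\frac{1}{\varepsilon}\,(1+\delta)^{2-d}\,\frac{\tilde h\bigl((1+\delta)r\bigr)}{\tilde h(r)}\;\le\;\frac{1}{\varepsilon}\,(1+\delta)^{2-d},
\]
and, as the posterior is carried by $\M$, $\PP^{X\mid Y=y}\bigl(\M\cap B(y,(1+\delta)r)\bigr)=1-\PP^{X\mid Y=y}(C)\ge1-\tfrac1\varepsilon(1+\delta)^{2-d}$.

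The one genuinely nontrivial point is the scaling identity: the naive substitution $u=\rho w$ only yields $g(\rho)=\rho^{1-d/2}h(\rho)$ with $h$ nonincreasing, hence merely the exponent $1-d/2$; it is the \emph{quadratic} substitution $u=\rho^2 s$ that peels off the full power $\rho^{2-d}$ — the ``codimension‑two'' rate — while leaving a monotone remainder. Everything else (Bayes' formula, the support of the posterior, the two monotonicity estimates) is routine, though one should record the mild hypothesis on $y$ ensuring $p_Y(y)\in(0,\infty)$, and note that the bound is non‑vacuous only when $d\ge3$.
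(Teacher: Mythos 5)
Your proof is correct, and the core mechanism is the same as the paper's: identify the posterior density (with respect to $\alpha$) as the radial noise kernel divided by $p_Y(y)$, factorize that kernel as $\rho^{2-d}$ times a nonincreasing function, and then bound the posterior mass of the complement of $B(y,(1+\delta)r)$ by a ratio of kernel values at $(1+\delta)r$ and $r$. The differences are in how the pieces are assembled. The paper first proves a separate ``monotone domination'' lemma (Lemma~\ref{lem:monotone_domination}), showing that replacing $G_{\sigma^2}$ by the pure power kernel $|x-y|^{2-d}$ can only decrease the ball's posterior mass, and it obtains the requisite monotonicity of the residual factor from the Bessel identity $\tfrac{d}{dz}\bigl(z^\nu K_\nu(z)\bigr)=-z^\nu K_{\nu-1}(z)$; the final estimate is then carried out for $|x-y|^{2-d}$ alone. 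You instead derive the factorization $g(\rho)=\rho^{2-d}\tilde h(\rho)$ in one stroke from the Gaussian scale-mixture integral via the quadratic substitution $u=\rho^2 s$ (an elementary alternative to the Bessel-derivative route), and you bound numerator and denominator directly in terms of $g$ itself, bypassing the comparison lemma entirely. Both yield the identical constant-free bound $\varepsilon^{-1}(1+\delta)^{2-d}$. The paper also reaches $\PP^{X\mid Y=y}$ through the hitting distribution of the reverse diffusion (Theorem~\ref{t:hitting_distribution}), whereas you appeal to Bayes' rule for the model~\eqref{e:noidea} directly; the two coincide since $g(|x-y|)=G_{\sigma^2}(x,y)$ and $p_Y(y)=h(y)$, which you implicitly use. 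Your proof is arguably a bit more self-contained, while the paper's comparison lemma isolates a reusable structural fact.
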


Our approach is inspired by recent advances in generative modelling, in particular diffusion-based methods that sample from complex distributions through iterative noise injection and denoising steps. 
Although such techniques originate from machine learning, we reinterpret them here as nonparametric estimators for conditional distributions. 
Our contribution lies in connecting these models to a high-dimensional statistical recovery problem under minimal structural assumptions, and in providing a rigorous justification of their validity via asymptotic concentration results.

We conclude this discussion with a brief outlook on how our approach can be applied in practice. Given training data $x_1, \dots, x_n$, a neural network
is trained to learn the drift function $b\colon\rr^d \to \rr^d$ of a time-reversed diffusion, see Remark~\ref{r:practice}.\ref{r:hier}.
The choice of the hyperparameter $\sigma$ in~\eqref{e:mc} should reflect the typical distance between the noisy observations $Y$ and the data manifold $\M$
supporting the unperturbed signals $X$, as described in the model formulation~\eqref{e:noidea}.
Given a new observation $y$, the corresponding latent $x$ is estimated by sampling a point $\widehat x$ according to the backward diffusion process described in Remark~\ref{r:practice}.\ref{i:schwelle}. 
In sufficiently high dimensions, this random output~$\widehat{x}$ will, with high probability, 
lie close to the nearest point on the manifold~$\M$, as stated in the theorem above. 
Even in moderate dimensions, the estimator~$\widehat{x}$ admits a clear statistical interpretation: 
up to approximation errors due to imperfect learning of the drift~$b$ and discretisation of the backward process, the output~$\widehat{x}$ constitutes a sample from the posterior distribution $\PP^{X \mid Y = y}$ in the parametric model~\eqref{e:noidea}.

In practice, the imperfect learning of $b$ prevents the simulated reverse diffusion from stopping exactly on $\M$. To account for this, we show in Section~\ref{s:high} that the high-dimensional concentration phenomenon is robust: provided that the estimated drift $\widetilde b_n$ is sufficiently accurate, an analogue of Theorem~\ref{t:nn} remains valid. Specifically, Theorem~\ref{t:nn2} establishes that the sampler concentrates, with high probability, in a small neighbourhood of $\M$ close to the metric projection of $y$.

\paragraph{Related work}
Recovering a latent variable $X$ from noisy observations $Y$ is central to inverse problems and filtering. Classical Bayesian and filtering approaches typically assume a known or parametrically specified (possibly infinite-dimensional) conditional distribution $\PP^{Y \mid X}$, as in Kalman or particle filtering~\cite{doucet2009tutorial}, Bayesian inverse problems~\cite{dashti17, nickl23}, or deconvolution problems~\cite{meister09}. 
In contrast, nonparametric regression methods such as kernel and nearest-neighbour estimators infer regression functions or conditional expectations without explicitly specifying the data-generating mechanism~\cite{gyorfi2002distribution}, but they rely on paired samples $(X_i, Y_i)$. 
Our work shares this model-free spirit, yet addresses a fundamentally different information setting: we only observe i.i.d.\ samples from $\PP^X$ (unpaired) and a single $Y = y$. 
Rather than projecting $y$ onto the finite training set, that is, the support of the empirical distribution $\PP_n^X \coloneq \tfrac{1}{n}\sum_{i=1}^n \delta_{X_i}$, we target the metric projection onto the (unknown) geometric support $\M = \operatorname{supp} \PP^X$.

When the support of $\PP^X$ lies close to a low-dimensional submanifold $\M \subset \rr^d$, 
manifold learning methods such as Isomap~\cite{tenenbaum2000global}, LLE~\cite{roweis2000nonlinear}, and diffusion maps~\cite{coifman2006diffusion} can recover geometric information from data. 
Statistical guarantees for manifold reconstruction and local geometry estimation include, for instance, 
\cite{niyogi2008finding, genovese2012manifold, aamari19, aamari23}. 
These approaches are typically unsupervised and do not provide posterior inference given a single noisy observation~$Y$. 
In contrast, we do not attempt to estimate~$\M$ directly, but rather exploit its geometric structure implicitly to perform conditional inference for~$X$.

A complementary line of research investigates denoisers and score estimation.
Score matching \cite{hyvarinen2005estimation} and denoising autoencoders \cite{vincent2011connection} show that denoising learns (an approximation to) the score,
i.e.\ the gradient of the log-density of a suitably smoothed distribution, see also \cite{alain2014regularized} for a refined analysis
of the reconstruction vector field as a score estimator.
This line of work demonstrates that learned denoisers can act as effective priors or regularisers in inverse problems, see
\cite{venkatakrishnan2013plug,chan2016plug,romano2017little}. However, the available theoretical guarantees are largely algorithmic:
they concern properties of the denoising operator (e.g.\ Lipschitz continuity or contractivity) or the convergence of iterative schemes
to fixed points of the denoising operator. Such results ensure stability of the procedure but do not yield distribution-level statements about the law of the recovered signal. 
In contrast, our analysis establishes \emph{global}, high-dimensional posterior concentration---at rates exponential in~$d$---onto metric projections, without requiring smoothness assumptions on the underlying data manifold.

Modern diffusion/score-based generative models \cite{ho2020denoising, song21} learn time-dependent scores to reverse a stochastic forward process, and have been extended to conditional sampling, inverse problems, and Bayesian inference; see \cite{ho21, dhariwal21, nichol22,  dou2024diffusion, denker24, janati2025mixture}. Training in these models typically assumes access to clean samples, which are then \emph{artificially corrupted} 
(e.g.\ by Brownian or Gaussian noise) in order to learn the reverse process.  
Our approach mirrors this pipeline only as a \emph{tractable prototype}:
we analyse isotropic multivariate Laplace corruption---realised as Brownian motion with exponential killing---to construct an efficient posterior sampler
with a random time horizon. Importantly, this auxiliary corruption is \emph{not} a modelling assumption on the true distribution~$\PP^{X \mid Y}$. 
Our main result on posterior concentration around the metric projection of~$Y$ onto~$\M$ 
suggests robustness with respect to the exact noise law. 
This, in turn, provides theoretical justification for projection-based denoising from \emph{unpaired} data. 
Related analyses that link denoising to local tangent information~\cite{stanczuk2024diffusion} 
are local in nature, whereas our result is global and measure-level.

From a probabilistic viewpoint, our framework builds on resolvent (Green) kernels and the properties of polar sets 
in classical potential theory~\cite{doob1984classical, blumenthal1968markov}. 
Technically, it relies on time-reversed Markov processes~\cite{chung2005markov} and on recent applications of time reversal to score-based models with random time horizons~\cite{christensen2025beyond}. Our main result also connects to high-dimensional probability
and the geometry of measures, where concentration-of-measure and geometric phenomena are central, see
\cite{ledoux2001concentration, buehlmann11, vershynin2018high, gorban18}. High-dimensional effects on nearest neighbour algorithms
are discussed in \cite{pestov13} and a general overview of such methods and their statistical properties can be found in  \cite{biau15}.

From a statistical perspective, our polarity assumption on the data support~$\M$ should be interpreted as a form of sparsity condition, commonly employed in high-dimensional statistics to characterise situations where the curse of dimensionality can be mitigated through structural properties of the data or of the underlying generative mechanism. 
Recent examples include nonparametric regression with deep neural networks under specific sparsity structures of the regression function~\cite{jsh20, kohler21, haya20}, and minimax-optimal convergence guarantees for deep generative models under sparsity assumptions on the data distribution, such as smooth submanifold supports 
or small-interaction models for factorisable densities; see~\cite{schreuder21, tangyang23, aamari24} for GANs and~\cite{azangulov24, tang24, fan25, kwon25} for diffusion models.

\paragraph{Structure of the paper}
In Section~\ref{s:killed}, we introduce the auxiliary killed diffusion model and explain how to sample from the posterior distribution within this framework. 
Subsequently, in Section~\ref{s:convergence}, we investigate the high-dimensional limit, 
establishing the connection to our original objective of approximating the projection of the observation onto the unknown manifold of clean data. 
Finally, in Section~\ref{su:bilder}, we demonstrate how our general framework and theoretical results can be applied to an image processing setting. 
For completeness, the appendix recalls two auxiliary results of a technical nature.

\section{Interpretation as a killed diffusion model}\label{s:killed}
The key observation enabling posterior sampling in the parametric model~\eqref{e:noidea} is that the noising mechanism transforming a clean input $x$
into a perturbed observation $y$ can be realised by evolving a Brownian motion starting at $x$ until it is killed at an independent exponentially distributed time.
More precisely, we define a stochastic process $(X_t)_{t \geq 0}$ in $\rr^d$, with $d\geq 3$, by
\[\PP^{X_0} = \alpha, \quad dX_t = dW_t, \quad \varrho_t = \frac{1}{\sigma^2},\]
where $W$ denotes a standard Brownian motion, $\varrho$ the killing rate, $\sigma > 0$ a fixed parameter, and $\alpha$ a probability distribution on $\rr^d$
supported on $\M$. As indicated in the introduction, $\M$ is assumed to be a polar set, 
that is, a set almost surely not hit after time~$0$ by Brownian motions with arbitrary initial law.
The process $X$ is killed with constant killing rate $\varrho_t=1/\sigma^2$ at a random time $\tau$, i.e.\ $\tau$ is exponentially distributed
with parameter $1/\sigma^2$. We set $X_t\coloneq X_{\tau-}$ for $t\geq\tau$; that is, the process is actually stopped at the killing time $\tau$.

\begin{remark}
Scaling the Brownian motion by a constant factor $\widetilde\sigma > 0$ does not yield a more general model, as this is equivalent to replacing
$\sigma$ by $\sigma \widetilde\sigma$.
\end{remark}

Let $Y=(Y_s)_{s\geq0}$ with $Y_s\coloneq X_{(\tau-s)\vee0}$ denote the time-reversed or backward process corresponding to $X$.

\begin{remark}
It is straightforward to verify that $Y_0=X_\tau$ is related to $X_0$ as $Y$ to $X$ in \eqref{e:noidea}. More specifically, $Y_0 - X_0$ has the same distribution as~$\sigma \sqrt{U} V$, where~$U$ is exponentially distributed with parameter~$1$ and~$V$ is an independent standard Gaussian random variable in~$\rr^d$.
\end{remark}

In standard generative diffusion models, the noising process typically evolves until a fixed, deterministic terminal time~$\tau$. 
In contrast, our approach employs an independent random terminal time, specifically, an exponentially distributed killing time. While the general theoretical framework can be developed in close analogy to the classical case, two essential differences arise:
\begin{itemize}
\item The time-reversed process also evolves up to a random terminal time, which turns out to be a hitting time.
\item Unlike the standard setting, where the backward process is time-inhomogeneous, the reversed process here is a time-homogeneous diffusion.
\end{itemize}
This theoretical framework has recently been developed in \cite{christensen2025beyond}. The main result can be summarised as follows.

\begin{theorem}[Backward diffusion]\label{theo:time_rev}
The time-reversed process $Y$ is a diffusion relative to its own filtration, satisfying
\begin{align}\label{e:Z}
dY_s &= b(Y_s)\,ds + \bm{1}_{\{Y_s\notin\M\}}\, d\widetilde W_s\nonumber\\
&= b(Y_s)\bm{1}_{\{s<\zeta\}}\,ds + \bm{1}_{\{s<\zeta\}}\,d\widetilde W_s,
\end{align}
where $\widetilde W$ denotes a Brownian motion,
\begin{align}
b(y)&\coloneq
\begin{cases}
\nabla\log h(y) & \text{if }y\notin\M,\\
0 & \text{otherwise,}
\end{cases}
\label{e:b}\\
h(y)&\coloneq \int G_{\sigma^2}(x,y)\, \alpha(dx),\nonumber\\
G_{\sigma^2}(x,y)&\coloneq (2\pi)^{-d/2}\,\frac{2}{\sigma^2}\left(\frac{\sqrt{2}}{\sigma\,|x-y|}\right)^{\nu}
K_\nu\!\left(\frac{\sqrt{2}\,|x-y|}{\sigma}\right),\nonumber\\
\nu&\coloneq {d-2\over2},\nonumber\\
\zeta&\coloneq \inf\{s\in\rr_+: Y_s\in\M\},\label{e:sigma}
\end{align}
and $K_\nu$ denotes the modified Bessel function of the second kind.
In view of the Markovian structure, the same dynamics hold when conditioning on any initial value~$Y_0 = y\in\rr^d$, that is,
on the event that the forward process~$X$ is killed at the point~$y = X_\tau$.
\end{theorem}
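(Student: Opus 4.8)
The plan is to derive \eqref{e:Z} from the general theory of time reversal of transient Markov processes, specialised to Brownian motion with exponential killing, whose potential kernel admits a closed form in terms of Bessel functions. Write $\lambda:=1/\sigma^2$. First I would record that the forward process $X$ is $d$-dimensional Brownian motion (generator $\tfrac12\Delta$) under constant killing at rate $\lambda$, so its sub-transition density with respect to Lebesgue measure is $p^\sigma_t(x,y)=\e^{-\lambda t}(2\pi t)^{-d/2}\exp(-|x-y|^2/(2t))$; in particular the killed motion is symmetric, hence self-dual, and transient in the potential-theoretic sense, with finite $0$-potential kernel $G^\sigma(x,y)=\int_0^\infty p^\sigma_t(x,y)\,dt$ for $x\neq y$. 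Evaluating this integral via the identity $\int_0^\infty t^{\mu-1}\e^{-bt-a/t}\,dt=2(a/b)^{\mu/2}K_\mu(2\sqrt{ab})$ with $\mu=-\nu$, $a=|x-y|^2/2$, $b=\lambda$ and $K_{-\nu}=K_\nu$ gives $G^\sigma=\sigma^2 G_{\sigma^2}$; consequently the function $h$ in the statement is exactly the (probability) Lebesgue density of $Y_0=X_\tau$, since conditioning on the $\mathrm{Exp}(\lambda)$ killing time supplies a factor $\lambda$ and $\lambda\int G^\sigma(x,y)\,\alpha(dx)=h(y)$.

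Next I would invoke the time-reversal theorem. Reversing the transient process $X$ from its death time $\tau$ yields, by the classical theory of reversed Markov processes \cite{chung2005markov} in the form developed for random horizons in \cite{christensen2025beyond}, a Markov process relative to its own completed filtration which, by self-duality of the killed motion, is the Doob $h$-transform of the killed Brownian motion by the excessive function $\tilde h:=\sigma^2 h$, i.e.\ $\tilde h(y)=\int G^\sigma(x,y)\,\alpha(dx)$, the potential of $\alpha$; it is started from the entrance law with density $h$. Since $G^\sigma(\cdot,x)$ is the Green function of $\lambda-\tfrac12\Delta$, one has $(\tfrac12\Delta-\lambda)\tilde h=-\alpha$ distributionally, so $\tilde h$ is $\lambda$-harmonic on the open set $\rr^d\setminus\M$. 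The elementary identity $\tfrac{1}{\tilde h}(\tfrac12\Delta-\lambda)(\tilde h f)=\tfrac12\Delta f+\nabla\log\tilde h\cdot\nabla f$, valid there, then exhibits on $\rr^d\setminus\M$ the generator of the diffusion $dY_s=\nabla\log h(Y_s)\,ds+d\widetilde W_s$---the killing term is annihilated precisely because $\tilde h$ is $\lambda$-harmonic off $\M$, and $\nabla\log\tilde h=\nabla\log h=b$. A routine martingale-problem and Lévy-characterisation argument promotes this to the pathwise SDE \eqref{e:Z}, with $\widetilde W$ a Brownian motion for the filtration of $Y$.

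It then remains to describe the behaviour of $Y$ at $\M$ and to reconcile the two lines of \eqref{e:Z}, which is where polarity of $\M$ enters. For $s<\tau$ one has $Y_s=X_{\tau-s}$ with $\tau-s\in(0,\tau)$, and a Brownian motion with arbitrary initial law almost surely never meets the polar set $\M$ at strictly positive times, so $Y_s\notin\M$ a.s.; whereas $Y_s=X_0\in\M$ for all $s\ge\tau$. Hence $\{Y_s\notin\M\}=\{s<\tau\}$ up to a null event, the hitting time $\zeta$ of \eqref{e:sigma} equals $\tau$ almost surely, and $Y$ is absorbed at $\M$ at this single terminal time. This identifies $\zeta$ as the lifetime of the reversed process, renders the convention $b\equiv 0$ on $\M$ in \eqref{e:b} immaterial (the process being frozen there), and shows that the two displayed forms in \eqref{e:Z} agree. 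The conditional assertion is then immediate from the Markov property: conditioning on $Y_0=y$ is the same as conditioning the forward process on being killed at $y$, while the transition mechanism just derived is independent of the initial law.

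The hard part will be the rigorous handling of the reversal and the $h$-transform across the polar set $\M$: that the reversed process is a well-behaved strong Markov process up to and including its absorption at $\M$; that the singular part of $(\tfrac12\Delta-\lambda)\tilde h$ carried by $\M$ surfaces as an entrance/absorption mechanism with entrance law $\alpha$ rather than as spurious drift or killing; and that no boundary contributions are lost in passing from the generator identity to the pathwise equation \eqref{e:Z}. This is precisely the content supplied by \cite{christensen2025beyond}; the remaining ingredients---the Bessel evaluation of $G^\sigma$ and the classical Doob-transform computation---are routine.
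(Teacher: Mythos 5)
Your proposal is correct and follows essentially the same route as the paper's proof: both identify the forward dynamics as a trivial $h$-transform of Brownian motion with constant killing, invoke the Chung--Walsh time-reversal theorem together with its Doob $h$-transform interpretation (made explicit in \cite{christensen2025beyond}) to obtain the backward drift $\nabla\log h$, and then use polarity of $\M$ to identify the lifetime of $Y$ with the hitting time $\zeta$. You spell out several steps that the paper delegates entirely to its citations---the Bessel-integral evaluation of the $\lambda$-potential kernel, the identification of $h$ with the density of $X_\tau$, the $\lambda$-harmonicity of the potential of $\alpha$ off $\M$, and the generator computation annihilating the killing term---but these are elaborations of, not deviations from, the paper's argument, and you correctly flag that the genuine technical work of making the reversal rigorous across the polar absorption set is precisely what \cite{christensen2025beyond} supplies.
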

\begin{proof}
The forward process $X$ is a Brownian motion with exponential killing at rate $1/\sigma^2$. This process corresponds to a trivial $h$-transform (with $h \equiv 1$)
of standard Brownian motion, so its generator is simply the Laplacian with killing. The kernel $G_{\sigma^2}(x,y)$ appearing in \eqref{e:b}
is the Green function associated with Brownian motion killed at an exponential time with parameter $1/\sigma^2$, see \cite[p.~146]{christensen2019optimal}.
Classical results on time-reversal of general Markov processes, more precisely \cite[Theorem 13.34]{chung2005markov}, and their connection to the $h$-transform
imply that the time-reversed process $Y$ associated with a Brownian motion killed at exponential time is a diffusion process with drift given by the gradient
of the logarithm of the associated Green potential. Applied to Brownian motion with exponential killing, this yields the drift $b(y) = \nabla \log h(y)$
as in \eqref{e:b}, see \cite[Proposition 3.1(2)]{christensen2025beyond} for the explicit derivation in a general diffusion setting.

Finally, since~$\M$ is polar for Brownian motion, it follows that the process~$Y$ almost surely reaches~$\M$ only at its killing time, 
and is therefore naturally stopped at the first hitting time
$\zeta = \inf\{s \ge 0 : Y_s \in \M\}$,
see~\cite[Proposition~3.4]{christensen2025beyond} for details.
\end{proof}

\begin{remark}
The assumption that $\M$ is polar is crucial: it ensures that the time-reversed process $Y$ reaches $\M$ only at its terminal time,
so that the stopping time $\zeta = \inf\{s \geq 0 : Y_s \in \M\}$ is a genuine hitting time. In particular, $\zeta$ is a stopping time relative to the filtration of $Y$,
unlike~$\tau$ in the forward process, which is an external random time. Without the polarity assumption, the process could hit $\M$ with positive probability
before the terminal time, and the description of $Y$ would require a more intricate stopping rule.
\end{remark}

Thanks to the Markov property of the backward diffusion $Y$, its construction for arbitrary initial values $Y_0 = y$ yields the posterior law
$\PP^{X_0 \mid X_\tau = y}$ directly as the distribution $\PP^{Y_\zeta}$ of the hitting point $Y_\zeta$.

\begin{theorem}[Posterior law of $X_0$ given $X_\tau=y$]\label{t:hitting_distribution}
For Lebesgue-almost all $y\in\rr^d$,
\begin{equation}\label{e:bayes}
\PP^{X_0 \mid X_\tau=y}=\PP^{Y_\zeta \mid Y_0 = y},
\end{equation}
which is given by
\begin{align}\label{e:hitting_distribution}
\PP^{Y_\zeta \mid Y_0 = y}(dx)=\frac{G_{\sigma^2}(x,y)}{{h}(y)}\,\alpha(dx).
\end{align}
\end{theorem}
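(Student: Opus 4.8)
The plan is to derive both assertions from elementary facts: equality \eqref{e:bayes} will follow from two pathwise identities together with the Markov property of the backward diffusion $Y$ established in Theorem~\ref{theo:time_rev}, while the explicit density \eqref{e:hitting_distribution} is then just Bayes' theorem applied to the forward model \eqref{e:noidea}.

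First I would record the almost sure identities $Y_0 = X_\tau$ and $Y_\zeta = X_0$. The first is immediate from $Y_s = X_{(\tau-s)\vee 0}$. For the second, note that $Y_\tau = X_0$ lies in $\M$ because $\alpha$ is supported on $\M$, so $\zeta \le \tau$; conversely, polarity of $\M$ for Brownian motion — already used in the proof of Theorem~\ref{theo:time_rev} — forces $Y$ to avoid $\M$ throughout $[0,\tau)$, whence $\zeta = \tau$ and $Y_\zeta = X_0$ (in particular $\zeta < \infty$ almost surely, since $\tau$ is finite almost surely). Consequently $(Y_0, Y_\zeta)$ and $(X_\tau, X_0)$ have the same joint law, so the regular conditional distribution of $Y_\zeta$ given $Y_0 = y$, computed from that joint law, coincides with $\PP^{X_0 \mid X_\tau = y}$. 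It then remains to identify this regular conditional distribution with the law of the hitting point produced by running the time-homogeneous diffusion of Theorem~\ref{theo:time_rev} from the deterministic start $Y_0 = y$: this is precisely what the Markov property of $Y$ yields, the family of path laws indexed by the starting point being a Markov kernel that integrates against $\PP^{Y_0}$ to the unconditional law of $Y$. The qualifier ``Lebesgue-almost all $y$'' enters because $\PP^{Y_0} = \PP^{X_\tau}$: by \eqref{e:noidea}, $X_\tau = X_0 + \sigma\sqrt{U}V$ possesses a strictly positive Lebesgue density, so $\PP^{X_\tau}$-null sets and Lebesgue-null sets coincide.

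For \eqref{e:hitting_distribution} I would apply Bayes' theorem with prior $\PP^{X_0} = \alpha$. Conditionally on $X_0 = x$ and on the killing time $\tau = t$, the observation $X_\tau$ equals $x + W_t$ in distribution, with Gaussian density $(2\pi t)^{-d/2}\,\e^{-|x-y|^2/(2t)}$; integrating against the law of $\tau$, exponential with parameter $1/\sigma^2$, shows that $\PP^{X_\tau \mid X_0 = x}$ has Lebesgue density
\[
y \longmapsto \int_0^\infty \frac{1}{\sigma^2}\,\e^{-t/\sigma^2}\,(2\pi t)^{-d/2}\,\e^{-|x-y|^2/(2t)}\,dt = G_{\sigma^2}(x,y),
\]
the last equality being the classical evaluation of this integral through a modified Bessel function of the second kind, cf.\ \cite[p.~146]{christensen2019optimal} (equivalently, $G_{\sigma^2}(x,\cdot)$ is the density of $x + \sigma\sqrt{U}V$, the isotropic multivariate Laplace law of \eqref{e:noidea}). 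Bayes' formula then gives, for Lebesgue-almost every $y$,
\[
\PP^{X_0 \mid X_\tau = y}(dx) = \frac{G_{\sigma^2}(x,y)\,\alpha(dx)}{\int G_{\sigma^2}(x',y)\,\alpha(dx')} = \frac{G_{\sigma^2}(x,y)}{h(y)}\,\alpha(dx),
\]
where the normalising constant $h(y) = \int G_{\sigma^2}(x,y)\,\alpha(dx)$ is exactly the marginal Lebesgue density of $X_\tau$ at $y$; as that density is integrable and strictly positive, $h(y)\in(0,\infty)$ off a Lebesgue-null set, so the ratio is well defined there and is automatically a probability measure.

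I expect the only genuine subtlety to be the step in the second paragraph that passes from ``the regular conditional distribution of $Y_\zeta$ given $Y_0$'' to ``the law of the hitting point of the diffusion started deterministically at $y$'': this should be spelled out by invoking the Markov and time-homogeneity structure asserted in Theorem~\ref{theo:time_rev} together with the disintegration of the unconditional path law of $Y$, rather than treated as self-evident, and it is also the point at which one checks that $\zeta < \infty$ holds $\PP^{X_0\mid X_\tau = y}$-almost surely for almost every $y$ (by disintegrating the identity $\zeta = \tau < \infty$). The remaining ingredients — the Bessel-function identity for the Green kernel $G_{\sigma^2}$ and the Fubini manipulation behind Bayes' formula — are classical and can be quoted.
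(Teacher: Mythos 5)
Your proof is correct, but it takes a genuinely different and more elementary route than the paper's. The paper simply cites the general time-reversal machinery of Chung and Walsh (their Theorems~13.34 and 13.39) together with \cite[Proposition~3.3(3)]{christensen2025beyond}, treating both \eqref{e:bayes} and \eqref{e:hitting_distribution} as consequences of that black box. You instead build the result from scratch: you first establish the pathwise identity $\zeta=\tau$ (hence $Y_0=X_\tau$ and $Y_\zeta=X_0$) using the polarity of $\M$ in both directions --- $Y_\tau=X_0\in\M$ yields $\zeta\le\tau$, while $X_t\notin\M$ for all $t>0$ a.s.\ yields $\zeta\ge\tau$ --- which makes \eqref{e:bayes} a near-tautology once the Markov structure from Theorem~\ref{theo:time_rev} is used to identify the regular conditional law $\PP^{Y_\zeta\mid Y_0=y}$ with the hitting distribution of the diffusion started deterministically at $y$. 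You then derive \eqref{e:hitting_distribution} by direct Bayes, after verifying that $G_{\sigma^2}(x,\cdot)$ is precisely the conditional density of $X_\tau$ given $X_0=x$ (i.e.\ $\frac{1}{\sigma^2}\int_0^\infty e^{-t/\sigma^2}p_t(x,y)\,dt$) and that $h$ is the strictly positive, $\sigma$-finite marginal density of $X_\tau$, which also justifies the ``Lebesgue-a.e.\ $y$'' qualifier. The gain of your approach is transparency and self-containedness: it isolates the single mechanism ($\zeta=\tau$) responsible for the identity and replaces an external citation with a short Bayes computation, which is arguably more instructive. The cost is that you rely on Theorem~\ref{theo:time_rev} for the Markov structure of $Y$, whereas the Chung--Walsh citation handles both the dynamics and the hitting distribution in one stroke and extends verbatim to more general killed diffusions. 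You correctly flag the one step that needs to be spelled out rather than asserted --- the passage from the regular conditional distribution of $Y_\zeta$ given $Y_0$ to the law of the hitting point of the SDE started at a fixed $y$ --- and this is indeed the only place where care is required; once invoked via Theorem~\ref{theo:time_rev}, there is no gap.
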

\begin{proof}
This follows by combining \cite[Theorem 13.34]{chung2005markov} and \cite[Theorem 13.39]{chung2005markov} (after correction of an obvious typo);
see also \cite[Proposition 3.3(3)]{christensen2025beyond}.
\end{proof}

The following results provide the key for obtaining the drift field $b$ directly from data.
\begin{corollary}\label{coro:drift}
For $y \in \rr^d \setminus \M$, the drift function $b$ defined in~\eqref{e:b} satisfies
\begin{equation}\label{e:cond}
b(y) = \E\bigl(\nabla_2 \log G_{\sigma^2}(X_0, y)\ \big|\ X_\tau = y\bigr),
\end{equation}
where $\nabla_2$ denotes the gradient with respect to the second argument, i.e.\ differentiation in $y$.
\end{corollary}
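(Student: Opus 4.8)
The plan is to differentiate the identity \eqref{e:hitting_distribution} for the posterior law with respect to $y$ and recognize the resulting expression as a conditional expectation. Concretely, write
\[
h(y) = \int G_{\sigma^2}(x,y)\,\alpha(dx),
\]
and note that for $y \in \rr^d \setminus \M$ the kernel $G_{\sigma^2}(x,\cdot)$ is smooth in a neighbourhood of $y$ (it is a positive multiple of $|x-y|^{-\nu}K_\nu(\sqrt2\,|x-y|/\sigma)$, which is real-analytic away from $x=y$, and $x=y$ is excluded since $x\in\M$ and $y\notin\M$). First I would justify differentiating under the integral sign: since $y\notin\M$ has positive distance from the closed set $\M$, on a small ball around $y$ the integrand and its $y$-gradient are bounded uniformly in $x$ by an $\alpha$-integrable (indeed bounded) function, using decay of $K_\nu$ and its derivative. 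This yields
\[
\nabla h(y) = \int \nabla_2 G_{\sigma^2}(x,y)\,\alpha(dx).
\]

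Next I would rewrite this using the elementary identity $\nabla_2 G_{\sigma^2}(x,y) = G_{\sigma^2}(x,y)\,\nabla_2\log G_{\sigma^2}(x,y)$, valid wherever $G_{\sigma^2}(x,y)>0$, which is everywhere on $\M\times(\rr^d\setminus\M)$. Dividing by $h(y)>0$ gives
\[
b(y) = \nabla\log h(y) = \frac{\nabla h(y)}{h(y)} = \int \nabla_2\log G_{\sigma^2}(x,y)\,\frac{G_{\sigma^2}(x,y)}{h(y)}\,\alpha(dx).
\]
By Theorem~\ref{t:hitting_distribution}, the measure $\frac{G_{\sigma^2}(x,y)}{h(y)}\,\alpha(dx)$ is exactly the conditional law $\PP^{X_0\mid X_\tau=y}$, so the right-hand side is precisely $\E(\nabla_2\log G_{\sigma^2}(X_0,y)\mid X_\tau=y)$, which is \eqref{e:cond}.

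The main obstacle is the rigorous justification of differentiation under the integral sign together with the assertion that $h(y)\in(0,\infty)$ on $\rr^d\setminus\M$; both hinge on controlling $G_{\sigma^2}$ and $\nabla_2 G_{\sigma^2}$ near the singularity at $x=y$. This is handled by the positive separation $\operatorname{dist}(y,\M)>0$, which confines the relevant values of $x$ to a region where the Bessel kernel is smooth and bounded, so that standard dominated-convergence-type arguments for parameter integrals apply. One should also note that Theorem~\ref{t:hitting_distribution} gives \eqref{e:hitting_distribution} only for Lebesgue-almost all $y$, whereas the present corollary is stated for all $y\notin\M$; this is consistent because the differentiation argument above directly produces $\nabla\log h(y)$ for every such $y$, and the identification of the normalized measure with $\PP^{X_0\mid X_\tau=y}$ is used only as an interpretive relabelling of the integral, which is the content of \eqref{e:cond} as written.
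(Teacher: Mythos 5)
Your proposal is correct and follows essentially the same argument as the paper's proof: differentiate $h$ under the integral sign, use $\nabla_2 G = G\,\nabla_2\log G$, divide by $h(y)$, and recognize the resulting normalized measure as the posterior from Theorem~\ref{t:hitting_distribution}. The paper states this more tersely; you simply make explicit the justification for differentiating under the integral and the reconciliation of ``almost all $y$'' versus ``all $y\notin\M$,'' both of which are sound.
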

\begin{proof}
By Theorem \ref{t:hitting_distribution}, for every $y \notin \mathscr{M}$,
\begin{align*}
\nabla \log h(y) = \int \frac{\nabla_2 G_{\sigma^2}(x,y)}{h(y)} \, \alpha(dx) &=   \E\bigl(\nabla_2 \log G_{\sigma^2}(Y_\zeta,y)\ \big|\ Y_0 = y\bigr)\\
&= \E\bigl(\nabla_2 \log G_{\sigma^2}(X_ 0,y)\ \big|\ X_\tau = y\bigr).
\end{align*}
\end{proof}

Since the conditional expectation is an $L^2$-projection (see Lemma \ref{l:cond} for $B=A$, $C=\Omega$) and since
$\frac{d}{dz}\big(z^{-\nu} K_\nu(z)\big)=-z^{-\nu} K_{\nu+1}(z)$
(see for example \cite[Appendix 2]{Borodin}), \eqref{e:cond} suggests that $b$ from \eqref{e:b} minimises
\begin{align*}
b&\mapsto
\E\Bigl(\bigl| b(X_\tau) - \nabla_2 \log G_{\sigma^2}(X_0,X_\tau)\bigr|^2\Bigr)\\
&=\E\biggl(\biggl| b\bigl(X_0+\sigma\sqrt{U}V\bigr) + {\sqrt{2}\over\sigma}\frac{V}{|V|}\frac{K_{\nu+1}(\sqrt{2U}|V|)}{K_{\nu}(\sqrt{2U}|V|)}\biggr|^2\biggr)
\end{align*}
over all measurable functions $b\colon\rr^d\to\rr^d$, where $X_0,U,V$
denote independent random random variables which are distributed according to $\alpha$, the exponential law with parameter 1,
and the standard normal distribution on $\rr^d$, respectively.
However, an asymptotic analysis of $\nabla_2 \log G_{\sigma^2}(X_0,X_\tau)$ suggests that the latter fails to be square-integrable. As a substitute, we obtain the
following result.

\begin{theorem}[Backward drift as minimiser of the denoising score
matching loss]\label{t:b}
Let $X_0, U, V$ be as above, and set for $\delta > 0$
\[
\M_\delta \coloneq \{ y \in \rr^d : d(y,\M) \le \delta \},
\quad\text{where}\quad d(y,\M) \coloneq \inf\{|x-y| : x \in \M\}.
\]
Then the function $b\colon\rr^d\to\rr^d$ defined in \eqref{e:b}
coincides on $\M_\delta^{\C}\coloneq \rr^d\setminus\M_\delta$
with the minimiser $\widetilde b$ of the objective
\begin{align}
b&\mapsto
\E\biggl(\biggl| b\bigl(X_0+\sigma\sqrt{U}V\bigr) + {\sqrt{2}\over\sigma}\frac{V}{|V|}\frac{K_{\nu+1}(\sqrt{2U}|V|)}{K_{\nu}(\sqrt{2U}|V|)}\biggr|^2\bm{1}_{\{\sigma\sqrt{U}|V|\geq\delta\}}\biggr)\label{e:MC}
\end{align}
over all measurable functions $b:\rr^d\to\rr^d$.
\end{theorem}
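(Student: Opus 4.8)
The plan is to recognise \eqref{e:MC} as a truncated denoising score-matching loss whose minimiser is, on the region of interest, an ordinary conditional expectation, and then to invoke Corollary~\ref{coro:drift}.

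First I would make the reduction explicit. Writing $z=\tfrac{\sqrt2\,|x-y|}{\sigma}$ and combining $\tfrac{d}{dz}\bigl(z^{-\nu}K_\nu(z)\bigr)=-z^{-\nu}K_{\nu+1}(z)$ with $\nabla_y|x-y|=\tfrac{y-x}{|x-y|}$, a short computation gives
\[
\nabla_2\log G_{\sigma^2}(x,y)\;=\;-\frac{\sqrt2}{\sigma}\,\frac{K_{\nu+1}(z)}{K_\nu(z)}\,\frac{y-x}{|x-y|},\qquad z=\frac{\sqrt2\,|x-y|}{\sigma}.
\]
Substituting $X_\tau=X_0+\sigma\sqrt U\,V$ yields $\nabla_2\log G_{\sigma^2}(X_0,X_\tau)=-\tfrac{\sqrt2}{\sigma}\tfrac{V}{|V|}\tfrac{K_{\nu+1}(\sqrt{2U}|V|)}{K_\nu(\sqrt{2U}|V|)}$, so the objective \eqref{e:MC} is exactly
\[
b\;\longmapsto\;\E\Bigl(\bigl|b(X_\tau)-\Phi\bigr|^2\,\bm{1}_A\Bigr),
\]
where $\Phi\coloneq\nabla_2\log G_{\sigma^2}(X_0,X_\tau)$ and $A\coloneq\{\sigma\sqrt U|V|\ge\delta\}=\{|X_\tau-X_0|\ge\delta\}$. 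On $A$ the Bessel argument obeys $z=\sqrt{2U}|V|=\tfrac{\sqrt2}{\sigma}|X_\tau-X_0|\ge\sqrt2\,\delta/\sigma>0$, and since $z\mapsto K_{\nu+1}(z)/K_\nu(z)$ is continuous on $(0,\infty)$ with limit $1$ at infinity, it is bounded on $[\sqrt2\,\delta/\sigma,\infty)$; hence $|\Phi|\,\bm{1}_A$ is bounded. Consequently, with the finite measure $\mu\coloneq\bm{1}_A\cdot\PP$, the objective equals $b\mapsto\lVert b(X_\tau)-\Phi\rVert_{L^2(\mu)}^2$ with $\Phi\in L^2(\mu)$.

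Next I would solve this least-squares problem. The set of attainable $b(X_\tau)$ is $L^2(\sigma(X_\tau),\mu)$ (Doob--Dynkin), a closed subspace of $L^2(\mu)$, so the minimiser is the $\mu$-conditional expectation $\E_\mu(\Phi\mid X_\tau)$ --- this is the $L^2$-projection property of Lemma~\ref{l:cond} applied under $\mu$ --- and it is pinned down $\mu$-a.e.: for $\PP^{X_\tau}$-a.e.\ $y$ with $w(y)\coloneq\PP(A\mid X_\tau=y)>0$, every minimiser $\widetilde b$ satisfies $\widetilde b(y)=\E(\bm{1}_A\Phi\mid X_\tau=y)/w(y)$. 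The decisive --- and really the only substantive --- step is the geometric observation that the truncation is inactive on $\M_\delta^\C$: if $d(y,\M)>\delta$ then, since $\alpha=\PP^{X_0}$ and hence, by Theorem~\ref{t:hitting_distribution}, the posterior $\PP^{X_0\mid X_\tau=y}$ is carried by $\M$, we have $|X_0-y|\ge d(y,\M)>\delta$ $\PP(\cdot\mid X_\tau=y)$-a.s., i.e.\ $\bm{1}_A=1$ a.s.\ under this conditional law. Therefore $w(y)=1$ and
\[
\widetilde b(y)=\E(\Phi\mid X_\tau=y)=\E\bigl(\nabla_2\log G_{\sigma^2}(X_0,y)\mid X_\tau=y\bigr)=b(y),
\]
the last equality by Corollary~\ref{coro:drift}, which applies as $\M_\delta^\C\subseteq\rr^d\setminus\M$. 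This proves $\widetilde b=b$ on $\M_\delta^\C$.

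The hard part is conceptual rather than computational: $A$ is \emph{not} independent of the conditioning variable $X_\tau$, so one may not simply drop $\bm{1}_A$; the whole mechanism rests on the implication $y\in\M_\delta^\C\Rightarrow\bm{1}_A=1$ a.s.\ conditionally, which collapses the reweighted conditional expectation to the plain one. This is also precisely why the \emph{un}truncated loss is ill-posed --- near the diagonal $K_{\nu+1}(z)/K_\nu(z)\asymp1/z$, whence $\E|\Phi|^2=\infty$ because $\E[1/(U|V|^2)]=\infty$ --- while the truncated one is a bona fide projection. Minor care is further needed because the minimiser is determined only $\mu$-a.e.\ (it is free on $\{w=0\}$) and because $b$ itself may fail to lie in $L^2(\mu)$, blowing up like $d(\cdot,\M)^{-1}$ near $\M$ (non-integrable already in codimension two); so the statement is best read as asserting that the minimiser, which is well defined on $\M_\delta^\C$, coincides there with $b$.
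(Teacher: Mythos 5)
Your proof is correct and follows essentially the same route as the paper's: rewrite the objective via the explicit gradient of $\log G_{\sigma^2}$, note that $\{X_\tau\notin\M_\delta\}\subset\{|X_\tau-X_0|\ge\delta\}$ because $X_0\in\M$ a.s., and combine Corollary~\ref{coro:drift} with Lemma~\ref{l:cond}. The one place where you add substance is the explicit check that the truncated score $\nabla_2\log G_{\sigma^2}(X_0,X_\tau)\,\bm{1}_{\{\sigma\sqrt U|V|\ge\delta\}}$ is bounded (via monotonicity and asymptotics of $z\mapsto K_{\nu+1}(z)/K_\nu(z)$), which is precisely the square-integrability hypothesis of Lemma~\ref{l:cond} that the paper's proof invokes without comment.
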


\begin{proof}
Straightforward calculations show that
$b(X_\tau) - \nabla_2 \log G_{\sigma^2}(X_0,X_\tau)$
has the same law as
\begin{align*}
& b\bigl(X_0+\sigma\sqrt{U}V\bigr)
- \nabla_2 \log G_{\sigma^2}\bigl(X_0,X_0+\sigma\sqrt{U}V\bigr)
= b\bigl(X_0+\sigma\sqrt{U}V\bigr) + {\sqrt{2}\over\sigma}\frac{V}{|V|}\frac{K_{\nu+1}(\sqrt{2U}|V|)}{K_{\nu}(\sqrt{2U}|V|)}.
\end{align*}
Since $\{X_0 + \sigma \sqrt{U} V \notin \M_\delta\} \subset \{ | \sigma \sqrt{U} V | \geq \delta\}$,
the claim follows now from the characterisation of the drift as a conditional expectation in Corollary \ref{coro:drift} and Lemma \ref{l:cond}.
\end{proof}

While the previous theorem provides a method for learning the drift field $b$ via score matching, the following result shows how the corresponding stopping rule for the backward process can be derived directly from it:

\begin{theorem}[Stopping rule]\label{t:stopping}
We have
\begin{align}\label{e:sigma2}
\zeta &= \inf\Bigl\{s\geq0: \sup_{r\leq s}|b(Y_r)|=\infty\Bigr\}\nonumber\\
&= \inf\Bigl\{s\geq0: \|b(Y)\|_{L^2(s)}=\infty\Bigr\}
\end{align}
for the stopping time in \eqref{e:sigma},
where we set
\[\| f\|_{L^2(s)}\coloneq\sqrt{\int_0^s|f(r)|^2dr}\]
for $f\colon\rr_+\to\rr^d$.
\end{theorem}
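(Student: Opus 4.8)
First I would reduce both identities in \eqref{e:sigma2} to the single almost-sure statement $\int_0^\zeta|b(Y_r)|^2\,dr=\infty$. The ``$\ge$'' direction is elementary: on the open set $\M^{\C}=\rr^d\setminus\M$ one has $\alpha=0$, so since $G_{\sigma^2}$ is the Green function of $\tfrac12\Delta-\tfrac1{\sigma^2}$, the potential $h$ solves $\tfrac12\Delta h=\tfrac1{\sigma^2}h$ there, is smooth and strictly positive, and $b=\nabla\log h$ is continuous; for $s<\zeta$ the path $\{Y_r:0\le r\le s\}$ is a compact subset of $\M^{\C}$, so $\sup_{r\le s}|b(Y_r)|<\infty$ and $\int_0^s|b(Y_r)|^2\,dr<\infty$, whence both infima in \eqref{e:sigma2} are $\ge\zeta$. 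Conversely, $Y$ is stopped at $\zeta$ with $b(Y_\zeta)=0$, and $\zeta<\infty$ almost surely---indeed $\zeta=\tau$, because $Y_s=X_{(\tau-s)\vee 0}\notin\M$ for $s<\tau$ by polarity of $\M$, while $Y_\tau=X_0\in\M$. Hence, once $\int_0^\zeta|b(Y_r)|^2\,dr=\infty$ is known, choosing $s=\zeta$ shows that both quantities in \eqref{e:sigma2} are infinite at $s=\zeta$ (for the supremum, finiteness of $\zeta$ forces $\sup_{r<\zeta}|b(Y_r)|=\infty$), so both infima are $\le\zeta$, and therefore equal $\zeta$.

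To establish $\int_0^\zeta|b(Y_r)|^2\,dr=\infty$ a.s., the plan is to apply Itô's formula to $s\mapsto\log h(Y_s)$ on $[0,\zeta)$. Using $\tfrac12\Delta\log h=\tfrac1{\sigma^2}-\tfrac12|\nabla\log h|^2=\tfrac1{\sigma^2}-\tfrac12|b|^2$ on $\M^{\C}$ together with $dY_s=b(Y_s)\,ds+d\widetilde W_s$, one obtains
\[
\log h(Y_s)=\log h(Y_0)+\tfrac12\int_0^s|b(Y_r)|^2\,dr+\frac{s}{\sigma^2}+N_s,\qquad N_s\coloneq\int_0^s b(Y_r)\cdot d\widetilde W_r,
\]
where $N$ is a continuous local martingale on $[0,\zeta)$ with $\langle N\rangle_s=\int_0^s|b(Y_r)|^2\,dr$. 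On the event $\{\langle N\rangle_\zeta<\infty\}$ the quadratic variation of $N$ stays bounded up to $\zeta$, so $N_s$ converges a.s.\ to a finite limit as $s\uparrow\zeta$ (by the Dambis--Dubins--Schwarz time change, say); since also $s/\sigma^2\to\zeta/\sigma^2<\infty$, the displayed identity forces $h(Y_s)$ to converge to a finite, positive limit as $s\uparrow\zeta$. Everything therefore hinges on ruling this out, i.e.\ on showing $h(Y_s)\to\infty$ a.s.\ as $s\uparrow\zeta$.

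This last step is where polarity enters through potential theory, and I expect it to be the main obstacle. First, $h=+\infty$ holds $\alpha$-almost everywhere: if not, then $\alpha(\{h\le M\})>0$ for some finite $M$, and $\beta\coloneq\alpha|_{\{h\le M\}}$ is a nonzero measure with
\[
\iint G_{\sigma^2}(x,y)\,\beta(dx)\,\beta(dy)=\int_{\{h\le M\}}(G_{\sigma^2}\beta)(y)\,\alpha(dy)\le\int_{\{h\le M\}}h(y)\,\alpha(dy)\le M<\infty,
\]
i.e.\ $\beta$ has finite energy; but $\beta$ is carried by the polar set $\M$, which has zero capacity, and a measure of finite energy puts no mass on a set of zero capacity---a contradiction. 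Since by Theorem~\ref{t:hitting_distribution} the law of $Y_\zeta$ is equivalent to $\alpha$, it follows that $h(Y_\zeta)=+\infty$ a.s.; and $h=\int G_{\sigma^2}(x,\cdot)\,\alpha(dx)$ is lower semicontinuous by Fatou's lemma, $G_{\sigma^2}(x,\cdot)$ being lower semicontinuous and nonnegative. By continuity of the paths of $Y$ one then has $\liminf_{s\uparrow\zeta}h(Y_s)\ge h(Y_\zeta)=+\infty$, contradicting the finite limit found above; hence $\PP(\int_0^\zeta|b(Y_r)|^2\,dr<\infty)=0$, which by the first paragraph completes the argument.

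Besides this potential-theoretic core---where one must quote the relevant classical facts (equivalence of probabilistic polarity with vanishing capacity, and the absence of mass on polar sets for finite-energy measures) for the resolvent kernel $G_{\sigma^2}$ of killed Brownian motion rather than for the pure Newtonian kernel, the two being comparable on bounded sets---the remaining points are routine: smoothness and strict positivity of $h$ on $\M^{\C}$ by elliptic regularity and differentiation under the integral (licit since $\alpha$ is a probability measure and $G_{\sigma^2}$ together with its $y$-derivatives is bounded uniformly for $y$ in compact subsets of $\M^{\C}$), the inclusion $\{h=\infty\}\subseteq\M$ which together with $Y_0=X_\tau$ having a density yields $h(Y_0)<\infty$ a.s., and the martingale convergence theorem invoked above.
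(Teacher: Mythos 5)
Your easy direction ($\zeta\le\eta\le\xi$ in the paper's notation) coincides with the paper's Steps 1--3, and the observation $\zeta=\tau<\infty$ a.s., which the paper invokes without spelling out, is spelled out correctly by you. The genuine difference lies in the hard direction $\xi\le\zeta$, i.e.\ in showing $\int_0^\zeta|b(Y_r)|^2\,dr=\infty$ a.s. The paper proceeds probabilistically: assuming $\PP(\xi=\infty)>0$, it uses Novikov/Girsanov to remove the drift on the stochastic interval $\auf 0,s\wedge\xi_n\wedge\zeta\zu$, so that $Y$ is a Brownian motion under an equivalent measure $\qq$; polarity of $\M$ then forces $\qq(Y_{s\wedge\xi_n\wedge\zeta}\in\M)=0$, contradicting $\PP(Y_{s\wedge\xi_n\wedge\zeta}\in\M)>0$. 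Your argument is instead analytic: applying It\^o to $\log h(Y)$ and using $\tfrac12\Delta h=\sigma^{-2}h$ off $\M$ gives the exact semimartingale decomposition $\log h(Y_s)=\log h(Y_0)+\tfrac12\int_0^s|b(Y_r)|^2\,dr+s/\sigma^2+N_s$, so that finiteness of $\langle N\rangle_\zeta$ would force $h(Y_s)$ to converge to a finite limit as $s\uparrow\zeta$; you then rule this out by showing, via the classical fact that a nonzero measure of finite energy cannot be carried by a polar set, that $h=\infty$ holds $\alpha$-a.e.\ and hence $h(Y_\zeta)=\infty$ a.s., and by lower semicontinuity of $h$ together with path continuity. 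Both proofs are correct and both ultimately rest on polarity of $\M$; the trade-off is that the paper's route stays entirely within stochastic calculus and needs only the probabilistic definition of polarity, whereas yours imports the equivalence ``polar $\Leftrightarrow$ zero capacity $\Leftrightarrow$ not charged by finite-energy measures'' for the $\sigma^{-2}$-resolvent kernel, in return for a sharper statement ($h=\infty$ on $\M$ $\alpha$-a.e., and an explicit blow-up identity for $\log h(Y_s)$) that is arguably more informative. If you flesh this out, do cite a precise reference for the finite-energy/zero-capacity fact for the $\lambda$-kernel (not merely the Newtonian kernel), and note explicitly that $\M=\operatorname{supp}\alpha$ is closed, which is what makes $h$ finite off $\M$ and $\{h=\infty\}\subseteq\M$.
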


\begin{proof}
Set
\[\eta\coloneq \inf\Bigl\{s\ge0:\ \sup_{r\le s}|b(Y_r)|=\infty\Bigr\}\]
and
\[\xi\coloneq \inf\Bigl\{s\ge0:\ \|b(Y)\|_{L^2(s)}=\infty\Bigr\}.\]
We show that $\zeta\leq\eta\leq\xi\leq\zeta$.

\emph{Step 1:}
Since $\alpha$ is supported on $\M$, the integral $h(y) = \int G_{\sigma^2}(x,y)\,\alpha(dx)$ is finite and smooth on $\rr^d \setminus \M$,
so $b(y) = \nabla \log h(y)$ is locally bounded outside $\M$.

\emph{Step 2:}
If $s<\zeta$, then $Y_r\in \mathbb R^d\setminus\M$ for all $r\le s$, and $b$ is locally bounded there. Hence $\sup_{r\le s}|b(Y_r)|<\infty$, which implies $\zeta\leq\eta$.

\emph{Step 3:}
If $\int_0^s |b(Y_r)|^2dr$ is infinite for finite $s$, $b(Y)$ must be unbounded on $[0,s]$, which implies that $\eta\leq\xi$.

\emph{Step 4:}
We show by contradiction that $\xi$ is almost surely finite.
Otherwise, let $\xi_n \coloneq \inf\{s \geq 0: \|b(Y)\|_{L^2(s)} > n\}$.
By Novikov’s condition,
\[{d\qq\over d\PP} \coloneq \exp\biggl(-\int_0^{s\wedge \xi_n \wedge \zeta}b(Y_r)\, d\widetilde W_r
-{1\over2}\int_0^{s\wedge \xi_n \wedge \zeta}|b(Y_r)|^2\, dr\biggr)\]
defines a probability measure $\qq \sim \PP$ for any fixed $s\in\rr_+$.
Relative to $\qq$, the process $Y$ has no drift and therefore behaves as a
standard Brownian motion on the stochastic interval $\auf0,s\wedge \xi_n \wedge \zeta\zu$.

Since $\PP(Y_\zeta \in \mathscr{M}) = 1$, we have $\PP(Y_\zeta \in \mathscr{M}, \xi = \infty) = \PP(\xi=\infty) > 0$.
As $\zeta < \infty$ almost surely, it follows that $\PP(Y_{s\wedge \zeta} \in \mathscr{M},\xi = \infty) > 0$ for some sufficiently large $s\in\rr_+$.
From $\xi_n \to \infty$ on $\{\xi = \infty\}$, we deduce that
\[\PP(Y_{s \wedge \xi_n \wedge \zeta} \in \mathscr{M}) \geq  \PP(Y_{s \wedge \xi_n \wedge \zeta} \in \mathscr{M}, \xi = \infty)
\underset{n \to \infty}{\longrightarrow} \PP(Y_{s \wedge \zeta} \in \mathscr{M}, \xi = \infty) > 0.\]
Thus, $\PP(Y_{s\wedge \xi_n \wedge \zeta} \in \mathscr{M}) > 0$ for some sufficiently large $n$.
However, $\qq(Y_{s\wedge \xi_n \wedge \zeta} \in \mathscr{M}) = 0$, because $Y$ is a $\qq$-Brownian motion on $\auf0,s\wedge \xi_n \wedge \zeta\zu$
and $\mathscr{M}$ is polar for Brownian motion. This contradicts $\PP \sim \qq$, hence our initial assumption
$\PP(\xi = \infty) > 0$ cannot hold.

\emph{Step 5:}
We now show by contradiction that $\xi\leq\zeta$. Otherwise,
$\|b(Y)\|_{L^2(\zeta)}<\infty$ with positive probability.
Since $\|b(Y)\|_{L^2(s)}=\|b(Y)\|_{L^2(s\wedge\zeta)}\leq\|b(Y)\|_{L^2(\zeta)}$ for all $s\in\rr_+$, this would imply
$\xi=\infty$ with positive probability, contradicting Step~4.
\end{proof}

\begin{remark}\label{r:practice}
The identity \eqref{e:bayes} in Theorem \ref{t:hitting_distribution} implies that if we interpret our observation $y$ as a perturbed version of the latent variable $X_0$
(with prior distribution $\alpha$), and if this perturbation is modelled by replacing $X_0$ with $X_\tau$, then the posterior law of $X_0$
having observed $X_\tau=y$ coincides with the law of $Y_\zeta$. The following two remarks indicate how this can be used in practice.
\begin{enumerate}
\item\label{r:hier} The drift function $b$ can be learned by training a neural network on a
Monte Carlo simulation of the right-hand side of \eqref{e:MC}. Indeed, the latter equals
\begin{equation}\label{e:mc}
\lim_{N\to\infty}{1\over N}\sum_{i=1}^N\biggl(\biggl| b\bigl(\xi_i+\sigma\sqrt{u_i}v_i\bigr) + {\sqrt{2}\over\sigma}
\frac{v_i}{|v_i|}\frac{K_{\nu+1}(\sqrt{2u_i}|v_i|)}{K_{\nu}(\sqrt{2u_i}|v_i|)}\biggr|^2\bm{1}_{\{\sigma\sqrt{u_i}|v_i|\geq\delta\}}\biggr),
\end{equation}
where $(\xi_i,u_i,v_i), i=1,\dots,N$ denote independent realisations distributed according to the law of $(X_0,U,V)$.
Note, however, that the law $\alpha$ of $X_0$ is typically unknown. As a substitute, the realisations $\xi_1,\dots,\xi_N$
can be sampled by bootstrapping (i.e.\ drawing with replacement) from the training data $x_1,\dots,x_n$ whose law equals $\alpha$.
\item\label{i:schwelle} Once $b$ has been learned, approximate samples $\widehat x$ from $\PP^{X_0 \mid X_\tau=y}$ can be obtained by simulating $Y$ with $Y_0=y$ as in \eqref{e:Z}
and stopping when $\|b(Y)\|_{L^2(s)}$ exceeds a large threshold, in accordance with \eqref{e:sigma2}.
\end{enumerate}
\end{remark}
\begin{remark}
For a fixed dimension $d$, as considered in this section, the model introduced above could also be used for unconditional sampling
from the prior distribution $\alpha$, similar to standard diffusion models. Indeed, consider $y \coloneq  y_m \coloneq  ma$ for some $a \in \rr^d$ with $|a| = 1$. Using that
\[\frac{G_{\sigma^2}(x,y)}{G_{\sigma^2}(0,y)}
\ \sim\
\exp\!\left(\frac{\sqrt{2}}{\sigma}\, a^\top x\right),
\qquad |y| = m \to \infty,\ \ y = m a,\ \ |a| = 1\]
uniformly in $x$ on compacts, see \cite[Proposition 3.2]{christensen2019optimal}, we obtain
\[
\frac{G_{\sigma^2}(x,y)}{h(y)}= \frac{G_{\sigma^2}(x,y)}{\int G_{\sigma^2}(x,z)\,\alpha(dz)}
= \frac{\frac{G_{\sigma^2}(x,y)}{G_{\sigma^2}(x,0)}}{\int \frac{G_{\sigma^2}(x,z)}{G_{\sigma^2}(x,0)}\, \alpha(dz)}
\approx \frac{\exp(\sqrt{2}/\sigma\, a^\top y)}{\int \exp(\sqrt{2}/\sigma\, a^\top z)\,\alpha(dz)} \to 1
\]
as $m \to \infty$ and subsequently $\sigma^2 \to \infty$.
Thus, for large initial values $y$ and large $\sigma$, it follows from \eqref{e:hitting_distribution} that
\[
\PP^{Y_\zeta \mid Y_0 = y}(dx) = \frac{G_{\sigma^2}(x,y)}{h(y)} \,\alpha(dx) \approx \alpha(dx),
\]
indicating that the process approximately samples from the data distribution $\alpha$. This shows that, beyond posterior inference,
the model can also serve as an unconditional generator, and the above procedure may be viewed as a random-time-horizon variant of
\emph{variance exploding} denoising diffusion models \cite{song21}. However, for the remainder of this paper, the focus will be on sampling
from the posterior distribution $\PP^{X_0|X_\tau=y}$ in a high-dimensional regime, considering moderate values of $|y|$.
\end{remark}

\section{Limits in high dimensions}\label{s:high}
\subsection{Convergence to the nearest neighbours in $\M$}\label{s:convergence}
One might object that the mapping $X_0 \mapsto X_\tau$ considered in the previous section does not represent the ``true'' mechanism by which the observation $y$ was generated,
and moreover depends on an unknown parameter $\sigma$. Remarkably, however, this dependence on $\sigma$ disappears in high dimensions. Even more surprisingly,
the posterior law $\PP^{X_0 \mid X_\tau = y} = \PP^{Y_\zeta \mid Y_0 = y}$ becomes increasingly concentrated around the projection of $y$ onto the support $\M$
of the prior distribution $\alpha$. In other words, the endpoint $Y_\zeta$ of the backward process approximates the point on $\M$ closest to the observed $y$.

This reveals an important geometric interpretation: the procedure effectively implements a nearest-neighbour-type projection, but not merely onto the empirical training points
$x_1, \dots, x_n$. Rather, it selects from the entire latent manifold $\M$ supporting the unperturbed data. This manifold---or an implicit estimate thereof---is
encoded in the neural network described in Remark~\ref{r:practice}.\ref{r:hier}, which is trained to approximate the backward drift $b$. This insight not only
justifies the use of diffusion-based denoising in settings with unknown noise structure, but also highlights its potential as a robust geometric
inference method in high-dimensional, label-scarce regimes.

The following theorem shows that, in high dimensions $d$, the posterior distribution \eqref{e:bayes} concentrates near the subset of $\M$ that lies closest to the observation $y$. Its assumptions and statement are illustrated below in Remark~\ref{r:sinn} and Section~\ref{su:bilder}.

\begin{theorem}[Asymptotic concentration near the projection I]\label{t:nn}
Let $\delta,\varepsilon>0$ and fix an observation $y\in\rr^d$.
If $\alpha(B(y,r))>\varepsilon$ for some ball $B(y,r)$ of radius $r>0$ centred at $y$, then
\begin{equation}\label{e:fast1}
\PP^{Y_\zeta|Y_0=y}\bigl(\M\cap B(y,(1+\delta)r)\bigr)\;\ge\; 1-\frac{1}{\varepsilon}(1+\delta)^{\,2-d}.
\end{equation}
\end{theorem}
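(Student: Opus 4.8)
The plan is to work directly with the explicit posterior \eqref{e:hitting_distribution}, $\PP^{Y_\zeta\mid Y_0=y}(dx)=G_{\sigma^2}(x,y)\,\alpha(dx)/h(y)$, and to exploit that $G_{\sigma^2}(x,y)$ depends on $(x,y)$ only through $\rho:=|x-y|$. Writing $G_{\sigma^2}(x,y)=\phi(\rho)$ with $\phi(\rho)=(2\pi)^{-d/2}\tfrac{2}{\sigma^2}(\sqrt2/\sigma)^{\nu}\rho^{-\nu}K_\nu(\sqrt2\rho/\sigma)$ and $\nu=(d-2)/2$, I would record two facts about $\phi$: (i) $\phi$ is strictly decreasing on $(0,\infty)$, since $z\mapsto z^{-\nu}$ and $z\mapsto K_\nu(z)$ both are; and (ii) the sharper statement that $\rho\mapsto\rho^{d-2}\phi(\rho)$ is decreasing. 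Fact (ii) is the heart of the matter: up to a positive constant and the substitution $z=\sqrt2\rho/\sigma$, and using $d-2=2\nu$, the function $\rho^{d-2}\phi(\rho)$ equals $z^{\nu}K_\nu(z)$, and the standard Bessel identity $\tfrac{d}{dz}\bigl(z^{\nu}K_\nu(z)\bigr)=-z^{\nu}K_{\nu-1}(z)<0$ (the companion of the one used in Section~\ref{s:killed}) shows it is strictly decreasing in $z$, hence in $\rho$.

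With these in hand, I would estimate the posterior mass of the complement. Since $\alpha$, and hence the posterior, is supported on $\M$, it suffices to bound $\PP^{Y_\zeta\mid Y_0=y}(\{x:|x-y|\ge(1+\delta)r\})=h(y)^{-1}\int_{|x-y|\ge(1+\delta)r}\phi(|x-y|)\,\alpha(dx)$. For the numerator, monotonicity (i) and $\alpha(\rr^d)=1$ give $\int_{|x-y|\ge(1+\delta)r}\phi(|x-y|)\,\alpha(dx)\le\phi((1+\delta)r)$. For the denominator, (i) and the hypothesis give $h(y)\ge\int_{B(y,r)}\phi(|x-y|)\,\alpha(dx)\ge\phi(r)\,\alpha(B(y,r))>\varepsilon\,\phi(r)$. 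Hence the complement carries mass at most $\phi((1+\delta)r)/(\varepsilon\,\phi(r))$.

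Finally, fact (ii) applied to $\rho_1=r<\rho_2=(1+\delta)r$ yields $((1+\delta)r)^{d-2}\phi((1+\delta)r)\le r^{d-2}\phi(r)$, i.e.\ $\phi((1+\delta)r)/\phi(r)\le(1+\delta)^{-(d-2)}=(1+\delta)^{2-d}$. Therefore the posterior mass outside $B(y,(1+\delta)r)$ is at most $\tfrac1\varepsilon(1+\delta)^{2-d}$, and \eqref{e:fast1} follows by taking complements.

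I expect the main (really the only) obstacle to be the identification and verification of fact (ii): plain monotonicity of $\phi$ alone would only produce the weaker exponent $(2-d)/2$, whereas the correct rate $2-d$ relies on the fact that $\phi$ behaves like the Newtonian potential $\rho^{2-d}$ near the origin and decays exponentially at infinity, captured precisely by the monotonicity of $\rho^{d-2}\phi(\rho)$ and the Bessel derivative formula. A small point to dispatch along the way is the standing requirement $0<h(y)<\infty$, which is automatic for $y\notin\M$ (where $h$ is finite and smooth), the case of interest.
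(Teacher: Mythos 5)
Your proof is correct, and the crux — the monotonicity of $z\mapsto z^{\nu}K_{\nu}(z)$ via the derivative identity $\tfrac{d}{dz}(z^{\nu}K_{\nu}(z))=-z^{\nu}K_{\nu-1}(z)$ — is exactly what powers the paper's Lemma~\ref{lem:monotone_domination}: your fact~(ii), that $\rho\mapsto\rho^{d-2}\phi(\rho)$ is decreasing, coincides (up to the positive constant $C_d(\sigma)$) with the paper's statement that $f(\rho)=(\sqrt{2}\rho/\sigma)^{\nu}K_{\nu}(\sqrt{2}\rho/\sigma)$ is decreasing. Where you diverge is in the assembly. The paper first isolates the Monotone Domination Lemma, which replaces the resolvent kernel by the pure Newtonian potential $|x-y|^{2-d}$ inside the posterior ratio, and then runs the elementary estimate on the Newtonian potential alone. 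You instead bound numerator and denominator of the posterior directly — tail mass $\le\phi((1+\delta)r)$ and $h(y)\ge\varepsilon\,\phi(r)$ by your fact~(i), followed by $\phi((1+\delta)r)/\phi(r)\le(1+\delta)^{2-d}$ by fact~(ii). Your route is a little shorter and skips the lemma entirely; the paper's route buys a clean, reusable potential-theoretic comparison (the resolvent-kernel posterior concentrates at least as strongly as the Newtonian one) at the cost of an extra intermediate step, yet both land on the identical bound $1-\varepsilon^{-1}(1+\delta)^{2-d}$. One small redundancy worth noting: your fact~(i) is actually a consequence of fact~(ii) together with the triviality that $\rho\mapsto\rho^{2-d}$ is decreasing, so the only genuine Bessel input in either argument is the single derivative identity you already cite.
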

Note that the bound and the constants involved are uniform over all measures $\alpha$ subject only to the local mass condition $\alpha(B(x,r))\ge\varepsilon$.
\begin{remark}\label{r:sinn}
Since $\varepsilon$ and $\delta$ should be regarded as small, the sets
$\M\cap B(y,r)$ and, consequently, $\M\cap B(y,(1+\delta)r)$ contain only those points
of the support $\M$ of the clean data distribution $\alpha$ that are nearest to the observed value $y$.
Moreover, the right-hand side of \eqref{e:fast1} approaches one as $d$ increases. Hence the estimate \eqref{e:fast1} confirms our claim that, for large $d$,
the posterior law is concentrated in the region of $\M$ closest to the observed $y$.
\end{remark}

The proof of Theorem~\ref{t:nn} relies in part on the following lemma.

\begin{lemma}[Monotone domination of the resolvent kernel]\label{lem:monotone_domination}
For $R>0$ and $y\in\rr^d$, we have
\[\frac{\displaystyle \int_{B(y,R)} G_{\sigma^2}(x,y)\,\alpha(dx)}
     {\displaystyle \int_{\rr^d} G_{\sigma^2}(x,y)\,\alpha(dx)}
\;\;\ge\;\;
\frac{\displaystyle \int_{B(y,R)} |x-y|^{2-d}\,\alpha(dx)}
     {\displaystyle \int_{\rr^d} |x-y|^{2-d}\,\alpha(dx)}.\]
\end{lemma}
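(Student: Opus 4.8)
The plan is to turn the claimed comparison of two $\alpha$-reweightings into a one-dimensional monotonicity statement. Since both $G_{\sigma^2}(x,y)$ and $|x-y|^{2-d}$ depend on $x$ only through $r\coloneq|x-y|$, write $g(r)$ for the radial profile of $G_{\sigma^2}(\cdot,y)$ and $p(r)\coloneq r^{2-d}$. Using $\nu=(d-2)/2$, so that $2-d=-2\nu$, and substituting $z=\sqrt2\,r/\sigma$ in the formula for $G_{\sigma^2}$ from Theorem~\ref{theo:time_rev}, all powers of $r$ combine into a single factor and one is left with
\[
\frac{g(r)}{p(r)}=(2\pi)^{-d/2}\,\frac{2}{\sigma^2}\;z^{\nu}K_\nu(z),\qquad z=\frac{\sqrt2\,r}{\sigma}.
\]
Thus $g/p$ is, up to a positive constant, the function $z\mapsto z^\nu K_\nu(z)$ evaluated at $z=\sqrt2\,r/\sigma$.

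The analytic core, and the step I expect to be the main obstacle, is that $z\mapsto z^\nu K_\nu(z)$ is \emph{strictly decreasing} on $(0,\infty)$. This is \emph{not} a consequence of the identity $\frac{d}{dz}\bigl(z^{-\nu}K_\nu(z)\bigr)=-z^{-\nu}K_{\nu+1}(z)$ already used in the paper, which involves the opposite power; one needs the companion relation $\frac{d}{dz}\bigl(z^{\nu}K_\nu(z)\bigr)=-z^{\nu}K_{\nu-1}(z)$ (see, e.g., \cite[Appendix 2]{Borodin}), together with the positivity of $K_{\nu-1}=K_{|\nu-1|}$ on $(0,\infty)$. Since $r\mapsto\sqrt2\,r/\sigma$ is increasing, it follows that $r\mapsto g(r)/p(r)$ is strictly decreasing on $(0,\infty)$.

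Granting this, the lemma is a Chebyshev-type association inequality, which I would establish by pivoting at the radius $R$. Put $A\coloneq B(y,R)$ and $m\coloneq g(R)/p(R)\in(0,\infty)$. On $A$ one has $r\le R$, hence $g(r)\ge m\,p(r)$ pointwise, while on $A^{\C}$ one has $r\ge R$, hence $g(r)\le m\,p(r)$; integrating against $\alpha$ gives $\int_A g\,d\alpha\ge m\int_A p\,d\alpha$ and $\int_{A^{\C}}g\,d\alpha\le m\int_{A^{\C}}p\,d\alpha$. Multiplying these two inequalities (all terms being nonnegative) yields
\[
\int_A g\,d\alpha\cdot\int_{A^{\C}}p\,d\alpha\;\ge\;m\int_A p\,d\alpha\cdot\int_{A^{\C}}p\,d\alpha\;\ge\;\int_A p\,d\alpha\cdot\int_{A^{\C}}g\,d\alpha.
\]
Adding $\int_A g\,d\alpha\cdot\int_A p\,d\alpha$ to the leftmost and rightmost members and using $\int_A+\int_{A^{\C}}=\int_{\rr^d}$ turns this into $\int_A g\,d\alpha\cdot\int_{\rr^d}p\,d\alpha\ge\int_A p\,d\alpha\cdot\int_{\rr^d}g\,d\alpha$, which is the asserted estimate after dividing by the two full integrals. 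Finally one disposes of degeneracies: it suffices to treat the case $0<\alpha(A)<1$ with $h(y)=\int_{\rr^d}G_{\sigma^2}(\cdot,y)\,d\alpha<\infty$ (the latter holding for Lebesgue-a.e.\ $y$, cf.\ Theorem~\ref{t:hitting_distribution}), in which case $g\asymp p$ near $x=y$ (as $z^\nu K_\nu(z)$ has a finite positive limit for $\nu>0$) while $p$ is bounded away from $y$, so that $\int_{\rr^d}|x-y|^{2-d}\,\alpha(dx)$ is finite and positive as well; if $\alpha(A)\in\{0,1\}$ both sides of the claimed inequality collapse to $0$ or to $1$.
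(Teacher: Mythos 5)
Your proof is correct and follows essentially the same route as the paper: both extract the radial factorisation $G_{\sigma^2}(x,y)=C\,|x-y|^{2-d}f(|x-y|)$ with $f(\rho)\propto(\kappa\rho)^\nu K_\nu(\kappa\rho)$, invoke the identity $\frac{d}{dz}\bigl(z^\nu K_\nu(z)\bigr)=-z^\nu K_{\nu-1}(z)$ to see that $f$ is strictly decreasing, and then pivot at $R$ to get pointwise domination on $B$ and $B^{\C}$. The only difference is cosmetic: you finish by cross-multiplying the two domination inequalities and adding, whereas the paper applies the monotonicity of $u\mapsto u/(u+c)$; your added care with the degenerate cases $\alpha(B)\in\{0,1\}$ and with finiteness of the integrals is a welcome, if minor, refinement.
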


\begin{proof}
Write
\[G_{\sigma^2}(x,y)=C_d(\sigma)\,|x-y|^{2-d}\,f(|x-y|)\]
with
$f(\rho)\coloneq (\kappa\rho)^{\nu}K_\nu(\kappa\rho)$,
$C_d(\sigma)\coloneq \frac{2}{\sigma^2}(2\pi)^{-d/2}>0$,
$\nu=\tfrac d2-1$, and $\kappa=\sqrt{2}/\sigma$.
By \cite[Appendix 2]{Borodin}, we have
\[\frac{d}{dz}\big(z^\nu K_\nu(z)\big)=-\,z^\nu K_{\nu-1}(z)\quad\text{ for }z>0,\nu>0,\]
which implies that $z\mapsto z^\nu K_\nu(z)$ is strictly decreasing on $(0,\infty)$. Consequently, $f$ is strictly decreasing on $(0,\infty)$.

Let $B\coloneq B(y,R)$, $B^{\C}\coloneq \rr^d\setminus B$, and set
$w(x)\coloneq |x-y|^{2-d}$ as well as $g(x)\coloneq f(|x-y|)$.
Since $f$ is decreasing, we have $g(x)\ge f(R)$ for $x\in B$ and $g(x)\le f(R)$ for $x\in B^{\C}$. Therefore,
\begin{align*}
\int_B w(x)g(x)\,\alpha(dx)\ &\ge\ f(R)\int_B w(x)\,\alpha(dx),\\
\int_{B^{\C}} w(x)g(x)\,\alpha(dx)\ &\le\ f(R)\int_{B^{\C}} w(x)\,\alpha(dx).
\end{align*}
Let $N_h\coloneq \int_B w g\,d\alpha$ and $D_h\coloneq \int w g\,d\alpha$. Then,
\[D_h = N_h+\int_{B^{\C}} w g\,d\alpha
\ \le\ N_h+f(R)\int_{B^{\C}} w\,d\alpha.\]
Hence, noting that $u\mapsto \frac{u}{u+c}$ is increasing on $\rr_+$ for any $c\ge0$,
\[\frac{N_h}{D_h}
\ \ge\ \frac{N_h}{\,N_h+f(R)\int_{B^{\C}} w\,d\alpha\,}
\ \ge\ \frac{f(R)\int_B w\,d\alpha}{\,f(R)\int_B w\,d\alpha+f(R)\int_{B^{\C}} w\,d\alpha\,}
\ =\ \frac{\int_B w\,d\alpha}{\int w\,d\alpha}.\]
Finally, the constant $C_d(\sigma)$ cancels from the ratio, which yields
\[\frac{\displaystyle \int_{B(y,R)} G_{\sigma^2}(x,y)\,\alpha(dx)}
     {\displaystyle \int G_{\sigma^2}(x,y)\,\alpha(dx)}
\ \ge\
\frac{\displaystyle \int_{B(y,R)} |x-y|^{2-d}\,\alpha(dx)}
     {\displaystyle \int |x-y|^{2-d}\,\alpha(dx)}.\qedhere\]
\end{proof}

\begin{proof}[Proof of Theorem \ref{t:nn}]
Set $R\coloneq (1+\delta)r$ and $B\coloneq B(y,R)$.
Theorem \ref{t:hitting_distribution} gives
\[\PP\!\left(Y_\zeta\in B\ \big|\ Y_0=y\right)
= \frac{\displaystyle \int_{B} G_{\sigma^2}(x,y)\,\alpha(dx)}
        {\displaystyle \int G_{\sigma^2}(x,y)\,\alpha(dx)}.\]
Applying Lemma~\ref{lem:monotone_domination} yields the lower bound
\begin{equation}\label{eq:monotone-lb}
\PP\!\left(Y_\zeta\in B\ \big|\ Y_0=y\right)
\;\ge\;
\frac{\displaystyle \int_{B} |x-y|^{2-d}\,\alpha(dx)}
     {\displaystyle \int |x-y|^{2-d}\,\alpha(dx)}.
\end{equation}
We now estimate the right-hand side of \eqref{eq:monotone-lb}. On $B(y,r)$, we have
$|x-y|\le r$, hence
\[\int_{B(y,r)} |x-y|^{2-d}\,\alpha(dx)
\;\ge\; r^{\,2-d}\,\alpha\big(B(y,r)\big)
\;\ge\; \varepsilon\,r^{\,2-d}.\]
On $B^{\C}$, we have $|x-y|\ge (1+\delta)r$, hence
\[\int_{B^{\C}} |x-y|^{2-d}\,\alpha(dx)
\;\le\; \big((1+\delta)r\big)^{2-d}.\]
Therefore,
\[\frac{\displaystyle \int_{B^{\C}} |x-y|^{2-d}\,\alpha(dx)}
     {\displaystyle \int   |x-y|^{2-d}\,\alpha(dx)}
\;\le\;
\frac{\displaystyle \int_{B^{\C}} |x-y|^{2-d}\,\alpha(dx)}
     {\displaystyle \int_{B}   |x-y|^{2-d}\,\alpha(dx)}
\;\le\; \frac{(1+\delta)^{2-d}}{\varepsilon},\]
which rearranges to
\[\frac{\displaystyle \int_{B} |x-y|^{2-d}\,\alpha(dx)}
     {\displaystyle \int |x-y|^{2-d}\,\alpha(dx)}
\;\ge\; 1-\frac{1}{\varepsilon}(1+\delta)^{2-d}.\]
Combining this with \eqref{eq:monotone-lb}
and noting that $Y_\zeta\in\M$ yields
\[\PP\!\left(Y_\zeta \in \M\cap B(y,(1+\delta)r)\ \big|\ Y_0=y\right)
\;\ge\; 1-\frac{1}{\varepsilon}(1+\delta)^{2-d}.\qedhere\]
\end{proof}

\paragraph*{Leading-order approximations}\label{su:leading_order}
Theorem \ref{t:nn} shows that, with probability close to one,
the stopped backward diffusion $Y_\zeta$ yields an element of $\M$ that lies almost as close to the observation $y$ as the nearest neighbour on~$\M$. To illustrate this perhaps surprising concentration effect, we now derive first-order approximations of the relevant quantities.

We begin by recalling that
\begin{equation*}\label{e:K}
K_\nu(x)\sim \sqrt{\frac{\pi}{2\nu}}\left(\frac{2\nu}{\e}\right)^{\nu}x^{-\nu}
\end{equation*}
uniformly for $x=o(\nu)$ as $\nu\to\infty$, where ``$\sim$'' indicates that the ratio tends to 1; see \cite[§10.40]{NIST:DLMF}.
Substituting this asymptotic form of the Bessel function into the definition of the resolvent kernel yields
\[G_{\sigma^2}(x,y)\approx{1\over\sqrt{2\pi\nu}}\Bigl({\nu\over\pi\e}\Bigr)^\nu{|x-y|^{-2\nu}\over\sigma^2}\]
in the limit of large dimension~$d$. Consequently,
\begin{equation}\label{e:limitb}
b(y)\approx d\frac{\int \frac{x-y}{|x-y|^d}\,\alpha(dx)}{\int {|x-y|^{2-d}}\,\alpha(dx)}.
\end{equation}
As $d\to\infty$, the measures $\beta_d(dx)\coloneq |x-y|^{-d}\alpha(dx)$ become increasingly concentrated on $\argmin_{x\in\M}|x-y|$.
Thus, the backward drift $b(y)$ asymptotically points in the direction of the orthogonal projection of $y$ onto $\M$, at least if this projection is unique. Moreover, from~\eqref{e:limitb}, we obtain
\[|b(y)|\approx{d\over\inf_{x\in\M}|x-y|}.\]
for large $d$, which implies that the estimated backward drift can be used to measure the distance of an observation $y$ from
the manifold $\M$.
Finally, the right-hand sides of \eqref{e:MC} and \eqref{e:mc} simplify asymptotically to
\begin{equation*}\label{e:bmin2}
\E\biggl(\Bigl| b\bigl(X_0+\sigma\sqrt{U}V\bigr) +\frac{V}{\sigma\sqrt{U}}\Bigr|^2\bm{1}_{\{\sigma\sqrt{U}|V|\geq\delta\}}\biggr)
\end{equation*}
and accordingly
\begin{equation*}\label{e:mc2}
\lim_{N\to\infty}{1\over N}\sum_{i=1}^N\biggl(\biggl| b\bigl(\xi_i+\sigma\sqrt{u_i}v_i\bigr)
+ \frac{v_i}{\sigma\sqrt{u_i}}\biggr|^2\bm{1}_{\{\sigma\sqrt{u_i}|v_i|\geq\delta\}}\biggr).
\end{equation*}

\paragraph*{Asymptotic concentration near the projection with estimated drift}
In practice, the true drift function $b$ of $Y$ is unknown, and
we can at best simulate a diffusion $\widetilde Y$ driven by an estimated drift $\widetilde b\approx b$. From an applied viewpoint, it is thus natural to ask whether a weaker form of~\eqref{e:fast1} still holds for~$\widetilde Y$.

A moment's reflection shows that we cannot hope to stop precisely
on $\M$ because the latter is not uniquely determined by its finite subset
$\{x_1,\dots,x_n\}$. Instead, we must be content with ending up in a
small neighbourhood $\widetilde\M$ of $\M$. The following theorem makes this
statement precise: Given that, for large sample size $n$,
a neural network or similar  optimisation tool provides
a sufficiently close approximation $\widetilde b_n$ of $b$,
the simulated value $\widetilde Y_{\widetilde\zeta}$ will
be satisfactory with high probability.
\emph{Satisfactory} here means that it is not far from
$\M$ and that, as in Theorem \ref{t:nn}, it is not much farther from the observation $y$ than the nearest neighbour in $\M$.
In accordance with Remark \ref{r:practice}.\ref{i:schwelle}, the stopping time $\widetilde\zeta$
is chosen to be of the form
\begin{equation}\label{e:zetaschlange}
\widetilde\zeta = \inf\Bigl\{s\geq0:
\bigl\|\widetilde b_n(\widetilde Y)\bigr\|_{L^2(s)}\geq M\Bigr\},
\end{equation}
for some sufficiently large $M\in\rr_+$.

For the following statement, we assume $\widetilde b_n$ to be an estimator
of $b$ that is based on an i.i.d.\ random sample $X_1,\dots,X_n$ from $\alpha$.
For fixed $n$ and $M\in\rr_+$, we consider a diffusion $\widetilde Y$ of the form
\begin{equation}\label{e:Z2}
d\widetilde Y_s = \widetilde b_n(\widetilde Y_s)\bm{1}_{\{s\leq\widetilde \zeta\}}\,ds
+ \bm{1}_{\{s\leq\widetilde \zeta\}}\,d\widetilde W_s,
\end{equation}
where $\widetilde\zeta$ is as in \eqref{e:zetaschlange} and $\widetilde W$ denotes a Brownian motion that is independent of $\widetilde{b}_n$.
\begin{theorem}[Asymptotic concentration near the projection II]\label{t:nn2}
Fix an observation $y \in \rr^d$. Suppose that
\begin{itemize}
\item for any $\widetilde\delta,\delta,\varepsilon>0$, 
we have
\begin{equation}\label{e:ngross}
\PP\biggl(\Bigl\|\bigl(\widetilde b_n(Y)-b(Y)\bigr)\bm{1}_{\{Y\notin\M_{\widetilde\delta}\}}\Bigr\|_{L^2(\zeta)}>\delta\,\bigg|\,Y_0=y\biggr)<\varepsilon
\end{equation}
for all sufficiently large $n$, where $\M_{\widetilde\delta}$ is defined as in Theorem \ref{t:b} and $Y$ satisfies \eqref{e:Z} with the same driving Brownian motion as in \eqref{e:Z2};
\item
for each $n \in \nn$ and $\widetilde\delta>0$,
the function $\widetilde b_n$ is Lipschitz on
$\M_{\widetilde\delta}^{\C}$ with Lipschitz constant
$L_{\widetilde\delta}$ independent of~$n$.
\end{itemize}
Let $\delta,\varepsilon,\widetilde\delta,\widetilde\varepsilon>0$.
If $\alpha(B(y,r))>\varepsilon$ for some ball $B(y,r)$
with radius $r>0$ around $y$, then
\begin{equation}\label{e:fast2}
\PP^{\widetilde Y_{\widetilde \zeta}|\widetilde Y_0=y}
\bigl(\M_{\widetilde\delta}\cap B(y,(1+\delta)r)\bigr)\;>\;
1-\frac{1}{\varepsilon}(1+\delta)^{\,2-d}-\widetilde\varepsilon,
\end{equation}
for sufficiently large  $M\in\rr_+$ and $n\in\nn$, where
$\widetilde Y,\widetilde\zeta$ are as in (\ref{e:Z2}, \ref{e:zetaschlange}).
\end{theorem}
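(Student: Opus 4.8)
The plan is to realise the estimated diffusion $\widetilde Y$ and the true backward diffusion $Y$ of~\eqref{e:Z} on a common Brownian motion $\widetilde W$, to show that $|\widetilde Y_{\widetilde\zeta}-Y_\zeta|$ is small with high probability, and then to import the concentration statement of Theorem~\ref{t:nn} for $Y_\zeta$. As a first reduction I would fix $\delta'\in(0,\delta)$ so close to $\delta$ that $\varepsilon^{-1}\bigl((1+\delta')^{2-d}-(1+\delta)^{2-d}\bigr)<\widetilde\varepsilon/2$ and set $\eta\coloneq\tfrac12\min\{(\delta-\delta')r,\widetilde\delta\}$. Since $Y_\zeta\in\M$ forces $d(\widetilde Y_{\widetilde\zeta},\M)\le|\widetilde Y_{\widetilde\zeta}-Y_\zeta|$, and $|\widetilde Y_{\widetilde\zeta}-y|\le|Y_\zeta-y|+|\widetilde Y_{\widetilde\zeta}-Y_\zeta|$, Theorem~\ref{t:nn} applied with $\delta'$ in place of $\delta$ reduces the assertion to showing $\PP(|\widetilde Y_{\widetilde\zeta}-Y_\zeta|>\eta\mid Y_0=y)<\widetilde\varepsilon/2$ for $M$ and $n$ large, because on the complement of $\{|\widetilde Y_{\widetilde\zeta}-Y_\zeta|>\eta\}$ one then has $d(\widetilde Y_{\widetilde\zeta},\M)\le\eta\le\widetilde\delta$ and $|\widetilde Y_{\widetilde\zeta}-y|\le(1+\delta')r+\eta\le(1+\delta)r$ whenever $Y_\zeta\in\M\cap B(y,(1+\delta')r)$.

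For this bound I would work on a high-probability event $G$ on which $\zeta\le T$ for a fixed large $T$ (using $\zeta<\infty$ a.s.), the path of $Y$ stays in a fixed ball up to time $\zeta$, the approximation bound~\eqref{e:ngross} holds with tube width $\widetilde\delta_0/2$ and a small threshold $\delta_0$, and $|Y_{\sigma_0}-Y_\zeta|$ is small, where $\sigma_0\coloneq\inf\{s:Y_s\in\M_{\widetilde\delta_0}\}$; the last point is possible because polarity of $\M$ gives $\sigma_0\uparrow\zeta$ as $\widetilde\delta_0\downarrow0$, hence $Y_{\sigma_0}\to Y_\zeta$ by path continuity. On $[0,\sigma_0\wedge\widetilde\zeta)$ the processes $Y$ and $\widetilde Y$ share the driving noise and both lie in $\M_{\widetilde\delta_0/2}^{\C}$, where $\widetilde b_n$ is Lipschitz with the $n$-free constant $L_{\widetilde\delta_0/2}$; writing $\widetilde Y_s-Y_s=\int_0^s(\widetilde b_n(\widetilde Y_u)-b(Y_u))\,du$, splitting the integrand into $\widetilde b_n(\widetilde Y_u)-\widetilde b_n(Y_u)$ and $\widetilde b_n(Y_u)-b(Y_u)$, and applying Cauchy--Schwarz together with~\eqref{e:ngross} and Grönwall's inequality (with the standard localisation at the first time $|\widetilde Y-Y|$ reaches $\widetilde\delta_0/2$), one obtains $\sup_{s\le\sigma_0\wedge\widetilde\zeta}|\widetilde Y_s-Y_s|\le\epsilon_1$ with $\epsilon_1\to0$ as $\delta_0\to0$; I then choose $\delta_0$ so that $\epsilon_1<\tfrac1{10}\min\{\widetilde\delta_0,\eta\}$. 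Along this initial segment $\widetilde Y$ stays where $\widetilde b_n$ is close to $b$, and $b$ is bounded along the (a.s.\ bounded) path of $Y$ up to $\sigma_0\le\zeta$; hence $\|\widetilde b_n(\widetilde Y)\|_{L^2(\sigma_0\wedge\widetilde\zeta)}$ is bounded by a constant $M_0$ independent of $M$, so fixing $M>M_0$ forces $\widetilde\zeta>\sigma_0$. At time $\sigma_0$ one then has $|\widetilde Y_{\sigma_0}-Y_\zeta|\le\epsilon_1+|Y_{\sigma_0}-Y_\zeta|$ small and $d(\widetilde Y_{\sigma_0},\M)<2\widetilde\delta_0$.

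The hard part, and the one I expect to consume most of the work, is to control $\widetilde Y$ on the final interval $[\sigma_0,\widetilde\zeta]$: to show that $\widetilde Y$ does not leave $\M_{\widetilde\delta}$ there and that $|\widetilde Y_{\widetilde\zeta}-\widetilde Y_{\sigma_0}|$ stays small. The mechanism is that the estimated drift inherits the singularity of $b$ near $\M$: since~\eqref{e:ngross} holds for every tube width while the Lipschitz constants $L_{\widetilde\delta'}$ are $n$-free, an equicontinuity argument upgrades~\eqref{e:ngross} to locally uniform convergence $\widetilde b_n\to b$ on the part of each $\M_{\widetilde\delta'}^{\C}$ visited by $Y$, so that for $n$ large $\widetilde b_n$ agrees with $b$ — and $|b(y)|\asymp d/d(y,\M)$ points towards the nearest point of $\M$ — down to some radius $\widetilde\delta'(n)\downarrow0$. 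A comparison for the radial process $d(\widetilde Y_s,\M)$, driven by this description of $\widetilde b_n$, then shows that after $\sigma_0$ the process is drawn monotonically towards $\M$, that the $L^2$-budget $\|\widetilde b_n(\widetilde Y)\|_{L^2(s)}^2$ accumulates at rate $\asymp d^2/d(\widetilde Y_s,\M)^2$, and hence that the threshold $M$ is reached after an additional time of order $\widetilde\delta_0^2/d$, before $\widetilde Y$ has moved by more than a constant multiple of $\widetilde\delta_0$, and while $\widetilde Y$ still lies in $\M_{\widetilde\delta'(n)}^{\C}$ where the description of $\widetilde b_n$ is valid; requiring $\widetilde\delta'(n)\le\widetilde\delta_0\e^{-M^2/d}$ is what forces $n$ to be taken large depending on $M$, which is the content of ``sufficiently large $M$ and $n$'' in the statement. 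Granting this, $|\widetilde Y_{\widetilde\zeta}-Y_\zeta|\le|\widetilde Y_{\widetilde\zeta}-\widetilde Y_{\sigma_0}|+|\widetilde Y_{\sigma_0}-Y_\zeta|<\eta$ on $G$, and since $\PP(G^{\C})$ can be made $<\widetilde\varepsilon/2$ for $\widetilde\delta_0$ small and $n$ large, the reduced bound — and hence~\eqref{e:fast2} — follows. Throughout, the obstacle to keep in mind is that~\eqref{e:ngross} controls $\widetilde b_n-b$ only in mean square, along the path of the \emph{true} diffusion $Y$, and only away from $\M$, so every statement about $\widetilde b_n$ near $\M$ or along $\widetilde Y$ has to be bootstrapped from there via the uniform Lipschitz hypothesis and the explicit asymptotics of $b$.
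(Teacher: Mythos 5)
Your overall strategy — couple $Y$ and $\widetilde Y$ on the same Brownian motion, show their endpoints are close, and import Theorem~\ref{t:nn} — is the right one and matches the paper's. The reduction in your first paragraph (picking $\delta'<\delta$ and an $\eta$ small enough that closeness of endpoints implies membership of $\M_{\widetilde\delta}\cap B(y,(1+\delta)r)$) is also fine. The problem is the order in which you fix the parameters, and this is not a cosmetic issue: it creates exactly the ``hard part'' that you then cannot close.

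You first fix a tube width $\widetilde\delta_0$, define $\sigma_0=\inf\{s:Y_s\in\M_{\widetilde\delta_0}\}$, run Grönwall on $[0,\sigma_0\wedge\widetilde\zeta)$, and \emph{then} choose $M>M_0$ so that $\widetilde\zeta>\sigma_0$. This guarantees that $\widetilde Y$ is \emph{not yet stopped} when $Y$ first enters the tube, so you are forced to analyse $\widetilde Y$ inside $\M_{\widetilde\delta_0}$, where the Lipschitz hypothesis gives no uniform control, $b$ blows up, and \eqref{e:ngross} no longer constrains $\widetilde b_n$ at all. The heuristics you invoke there — an ``equicontinuity argument'' upgrading the path-$L^2$ bound \eqref{e:ngross} to locally uniform convergence of $\widetilde b_n$ to $b$, the leading-order identity $|b(y)|\asymp d/d(y,\M)$ (which is only an asymptotic approximation from Section~\ref{su:leading_order}, not an exact statement), a comparison argument for the radial part of $\widetilde Y$, and the claimed budget rate $\asymp d^2/d(\widetilde Y_s,\M)^2$ — are each genuine gaps, and together they constitute the bulk of what would have to be proved. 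In particular, nothing in \eqref{e:ngross} (a mean-square bound along the \emph{true} path $Y$) implies anything about $\widetilde b_n$ pointwise on a neighbourhood of $\M$, nor about its direction field.

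The paper avoids this entirely by reversing the order of the quantifiers. It first chooses $M$ (large enough that $Y_{\zeta_M}$ is close to $Y_\zeta$ in the sense of Step~2), and only \emph{then} chooses the tube width $1/k$ so small that, with high probability, $\zeta_{2M}\le\xi_k$ — i.e.\ the $L^2$-budget $\|b(Y)\|_{L^2(\cdot)}$ reaches $2M$ \emph{before} $Y$ ever enters $\M_{1/k}$ (Step~3; this is where the blow-up of the budget at $\zeta$, Theorem~\ref{t:stopping}, is used). Then the Grönwall bound on $[0,\xi_k\wedge T]$ (Step~4) is all that is needed: by a triangle inequality on the $L^2$-norm it forces $\|\widetilde b_n(\widetilde Y)\|_{L^2(\cdot)}$ to exceed $M$ no later than $\zeta_{2M}$, hence $\widetilde\zeta\le\zeta_{2M}\le\xi_k$ (Step~5), and similarly $\widetilde\zeta\ge\zeta_{M/2}$ (Step~6). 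So $\widetilde Y$ is stopped \emph{before} it ever leaves the region where the uniform Lipschitz constant and \eqref{e:ngross} bite, and the behaviour of $\widetilde b_n$ near $\M$ is never needed. Your phase $[\sigma_0,\widetilde\zeta]$ simply does not occur under the correct choice of constants. To repair your argument, replace $\sigma_0$ by the paper's $\xi_k$, fix $M$ before $k$, and prove the two-sided sandwich $\zeta_{M/2}\wedge T\le\widetilde\zeta\wedge T\le\zeta_{2M}\wedge T$; the rest of your reduction then goes through.
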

\begin{proof}
\emph{Step 1:}
Without loss of generality, we choose $\widetilde\delta$ sufficiently small such that
\[\frac{1}{\varepsilon}
\biggl(1+\delta-{\widetilde\delta\over r}\biggr)^{\,2-d}
-\frac{1}{\varepsilon}(1+\delta)^{\,2-d}
\leq{\widetilde\varepsilon\over10}.\]
(Otherwise, the second term on the right-hand side of \eqref{e:fast2} would read as
$\frac{1}{\varepsilon}(1+\delta-{\widetilde\delta\over r})^{\,2-d}$.)
For sufficiently large $T$, we have $\PP(\zeta\geq T)\leq\widetilde\varepsilon/10$.
Hence, we may restrict our attention to stopping before $T$ without significantly affecting the success probability.

\emph{Step 2:}
For
\[\zeta_M \coloneq  \inf\Bigl\{s\geq0:
\bigl\|b(Y)\bigr\|_{L^2(s)}\geq M\Bigr\},\]
we have $\zeta_M\uparrow\zeta$ and $Y_{\zeta_M}\to Y_\zeta$ as $M\to\infty$,
by Theorem \ref{t:stopping} and continuity of $Y$.
Choose $M$ sufficiently large such that
\[\sup\bigl\{|Y_s- Y_\zeta|: s\in\auf\zeta_{M/2},\zeta\zu\bigr\}
>\widetilde\delta/2\]
occurs with probability at most $\widetilde\varepsilon/10$.

\emph{Step 3:}
Define
\[\xi_k \coloneq  \inf\Bigl\{s\geq0: d(Y_s,\M)\leq 1/k\Bigr\}.\]
Since
\[\bigcap_{k\in\nn}\bigl\{\xi_k<\zeta_{2M}\bigr\}\subset\Bigl\{\inf_{s\leq\zeta_{2M}} d(Y_s,\M)=0\Bigr\}
=\{Y_s\in\M \text{ for some }s\leq\zeta_{2M}\}
\subset\{\zeta\leq\zeta_{2M}\},
\]
which has probability 0 by Theorem \ref{t:stopping}, we can choose $k \in \nn$ large enough such that $\xi_k < \zeta_{2M}$ occurs with probability at most $\widetilde\varepsilon/10$.

\emph{Step 4:}
Let $L \coloneq L_{1/k}$ denote the Lipschitz constant of $\widetilde b_n$ on $\M_{1/k}^{\C}$.
Choose $n$ as in \eqref{e:ngross} for $1/k, \bar\delta, \widetilde\varepsilon/10$ in place of $\widetilde\delta, \delta, \varepsilon$, where we define
\[\bar\delta\coloneq
{M\over4}\wedge\biggl({\widetilde\delta\over2}\wedge{M\over4L\sqrt{T}}\biggr)\sqrt{\e^{-L^2 T^2}\over 2T}.\]
Let $A$ denote the corresponding exceptional event in \eqref{e:ngross}, which satisfies $\PP(A) < \widetilde\varepsilon/10$.
From
\[d\bigl(Y^{\xi_k}-\widetilde Y^{\xi_k}\bigr)_t
=\Bigl(b\bigl(Y^{\xi_k}_t\bigr)-\widetilde b_n\bigl(\widetilde Y^{\xi_k}_t\bigr)\Bigr)\bm{1}_{\auf0,{\xi_k}\zu}(t)dt,\]
it follows that
\begin{align*}
&\bigl|Y^{\xi_k}-\widetilde Y^{\xi_k}\bigr|^2_t\\
&\leq 2\biggl(\int_0^t\bigl|b\bigl(Y_s\bigr)-\widetilde b_n\bigl(Y_s\bigr)\bigr|\bm{1}_{\auf0,{\xi_k}\zu}(s)ds\biggr)^2
+2\biggl(\int_0^t\bigl|\widetilde b_n\bigl(Y_s\bigr)-\widetilde b_n\bigl(\widetilde Y_s\bigr)\bigr|\bm{1}_{\auf0,{\xi_k}\zu}(s)\,ds\biggr)^2\\
&\leq 2t\int_0^t\Bigl(\bigl|b\bigl(Y_s\bigr)-\widetilde b_n\bigl(Y_s\bigr)\bigr|^2\bm{1}_{\auf0,{\xi_k}\zu}(s)
+\bigl|\widetilde b_n\bigl(Y_s\bigr)-\widetilde b_n\bigl(\widetilde Y_s\bigr)\bigr|^2\bm{1}_{\auf0,{\xi_k}\zu}(s)\Bigr)\,ds\\
&\leq 2t\bar\delta^2 + \int_0^t 2tL^2\bigl|Y^{\xi_k}-\widetilde Y^{\xi_k}\bigr|^2_s\,ds
\end{align*}
on $A^{\C}$. By Grönwall's inequality, we obtain
\[\bigl(\bigl(Y^{\xi_k}-\widetilde Y^{\xi_k}\bigr)^\star_T\bigr)^2\leq2T\bar\delta^2 \e^{L^2T^2}
\leq \biggl({\widetilde\delta\over2}\wedge{M\over4L\sqrt{T}}\biggr)^2,\]
and hence
\[\bigl(Y-\widetilde Y\bigr)^\star_{{\xi_k}\wedge T}\leq {\widetilde\delta\over2}\wedge {M\over4L\sqrt{T}}\]
on $A^{\C}$, where we use the notation $f^\star(s)\coloneq\sup_{r\leq s}|f(r)|$ for $f\colon\rr_+\to\rr^d$.

\emph{Step 5:}
We now show that $\widetilde\zeta \wedge T \leq \zeta_{2M} \wedge T$ and hence ${\xi_k} \wedge T \geq \widetilde\zeta \wedge T$ hold on the set ${\zeta_{2M} \leq \xi_k} \cap A^{\C}$.
Indeed, on ${\zeta_{2M} \leq \xi_k \wedge \widetilde\zeta \wedge T} \cap A^{\C}$, we have
\begin{align*}
\bigl\|\widetilde b_n(\widetilde Y)\bigr\|_{L^2(\zeta_{2M})}
&\geq \bigl\|b(Y)\bigr\|_{L^2(\zeta_{2M})}
-\bigl\|b(Y)-\widetilde b_n(Y)\bigr\|_{L^2(\zeta_{2M})}
-\bigl\|\widetilde b_n(Y)-\widetilde b_n(\widetilde Y)\bigr\|_{L^2(\zeta_{2M})}\\
&\geq 2M-\bar\delta-L\sqrt{T}\bigl(Y-\widetilde Y\bigr)^\star_{\zeta_{2M}}\\
&\geq 2M-{M\over2}-{M\over2}= M,
\end{align*}
which implies that
$\widetilde\zeta\leq\zeta_{2M}$.

\emph{Step 6:}
We next show that $\zeta_{M/2} \wedge T \leq \widetilde\zeta \wedge T \leq \zeta \wedge T$ holds on the set ${\zeta_{2M} \leq \xi_k} \cap A^{\C}$.
The second inequality follows from Step 5, since $\zeta_{2M} \wedge T \leq \zeta \wedge T$.
For the first inequality, note that Steps 3–5 yield
\begin{align*}
\bigl\|b(Y)\bigr\|_{L^2(\widetilde\zeta)}
&\geq \bigl\|\widetilde b_n(\widetilde Y)\bigr\|_{L^2(\widetilde\zeta)}
-\bigl\|\widetilde b_n(\widetilde Y)-\widetilde b_n(Y)\bigr\|_{L^2(\widetilde\zeta)}
-\bigl\|\widetilde b_n(Y)-b(Y)\bigr\|_{L^2(\widetilde\zeta)}\\
&\geq M-L\sqrt{T}\bigl(\widetilde Y-Y\bigl)^\star_{\widetilde\zeta}- \bar\delta\\
&\geq M-{M\over4}-{M\over4}= {M\over2},
\end{align*}
and hence $\zeta_{M/2}\leq\widetilde\zeta$
on the set
$\{\zeta_{2M}\leq\xi_k\}\cap A^{\C}\cap\{\widetilde\zeta\leq T\}$.

\emph{Step 7:}
Finally, the claim follows from
\begin{align}
&\PP^{\widetilde Y_{\widetilde \zeta}|\widetilde Y_0=y}
\Bigl(\bigl(\M_{\widetilde\delta}\cap B(y,(1+\delta)r)\bigr)^{\C}\Bigr)\notag\\
&\leq \PP^{\widetilde Y_{\widetilde \zeta\wedge T} \mid \widetilde Y_0=y}
\Bigl(\bigl(\M_{\widetilde\delta}\cap B(y,(1+\delta)r)\bigr)^{\C}\Bigr)
+\PP\bigl(\zeta\geq T\,\big|\,\widetilde Y_0=y\bigr)
+\PP\bigl(\widetilde\zeta\wedge T>\zeta\wedge T\,\big|\,\widetilde Y_0=y\bigr) \notag\\
&\leq \PP^{\widetilde Y_{\widetilde \zeta\wedge T}|\widetilde Y_0=y}
\Bigl(\bigl(\M_{\widetilde\delta}\cap B(y,(1+\delta)r)\bigr)^{\C}\Bigr)
+{3\widetilde\varepsilon\over10} \tag{Steps 1 and 6}\\
&\leq\PP^{Y_{\widetilde\zeta\wedge T}|Y_0=y}
\Bigl(\bigl(\M_{\widetilde\delta/2}\cap B(y,(1+\delta)r-\widetilde\delta/2)\bigr)^{\C}\Bigr)
+{5\widetilde\varepsilon\over10} \tag{Steps 4 and 5}\\
&\leq \PP^{Y_{\zeta\wedge T}|Y_0=y}
\Bigl(\bigl(\M\cap B(y,(1+\delta)r-\widetilde\delta)\bigr)^{\C}\Bigr)
+{8\widetilde\varepsilon\over10} \tag{Steps 2 and 6}\\
&\leq \PP^{Y_{\zeta}|Y_0=y}
\Bigl(\bigl(\M\cap B(y,(1+\delta)r-\widetilde\delta)\bigr)^{\C}\Bigr)
+{9\widetilde\varepsilon\over10} \tag{Step 1}\\
&\leq\frac{1}{\varepsilon}
\bigl(1+\delta-\widetilde\delta/r\bigr)^{\,2-d}
+{9\widetilde\varepsilon\over10} \tag{Theorem \ref{t:nn}}\\
&\leq\frac{1}{\varepsilon}(1+\delta)^{\,2-d}+\widetilde\varepsilon. \tag{Step 1}
\end{align}
\end{proof}

\begin{remark}[Motivation of condition \eqref{e:ngross}]
By Theorem \ref{t:b}, its proof, and Remark \ref{r:practice}.\ref{i:schwelle}, we can hope that
a reasonable statistical learning procedure provides an estimate
$\widetilde b_n$ that is close to $b$ in an $L^2$-sense. More specifically, for any fixed $\widetilde\delta>0$,
\[\E\Bigl(\bigl|\widetilde b_n(X_\tau)-b(X_\tau)\bigr|^2
\bm{1}_{\{X_\tau\notin\M_{\widetilde\delta}\}}\Bigr)\]
is small for large $n$.
Since
\begin{flalign*}
\E\Bigl(\bigl|\widetilde b_n(X_\tau)&-b(X_\tau)\bigr|^2
\bm{1}_{\{X_\tau\notin\M_{\widetilde\delta}\}}\Bigr)
={1\over\sigma^2}\E\biggl(\int_0^\tau\bigl|\widetilde b_n(X_t)-b(X_t)\bigr|^2
\bm{1}_{\{X_t\notin\M_{\widetilde\delta}\}}dt\biggr)\\
&={1\over\sigma^2}\E\biggl(\int_0^\zeta\bigl|\widetilde b_n(Y_s)-b(Y_s)\bigr|^2
\bm{1}_{\{Y_s\notin\M_{\widetilde\delta}\}}\, ds\biggr)\\
&\geq{1\over\sigma^2}\delta^2 \int\PP\biggl(\Bigl\|\bigl(\widetilde b_n(Y)-b(Y)\bigl)\bm{1}_{\{Y\notin\M_{\widetilde\delta}\}}\Bigr\|_{L^2(\zeta)}>\delta \,\bigg| \,
Y_0 = y\biggr)\, \PP^{X_\tau}(dy)
\end{flalign*}
holds by Lemma \ref{l:exp},
this motivates \eqref{e:ngross} as a reasonable requirement of
closeness of $\widetilde b_n$ and $b$.
\end{remark}

\begin{example}[Learning projections onto arbitrary given sets]
Our approach can be used as a numerical method for computing the
projection onto a given polar set $\M\subset\rr^d$ for large $d$,
provided that we know how to draw i.i.d.\ samples from
some law $\alpha$ with support $\M$.
By generating training data via the parametric model \eqref{e:noidea} and training a diffusion model accordingly,
the denoising procedure yields---in high dimensions---an approximate projection of any input $Y$ onto $\M$.

The ``blessing of dimensionality'' aspect of the algorithm is particularly relevant here, since even for relatively simple geometric objects
the numerical approximation of the orthogonal projection becomes prohibitively large in high dimensions. 
In the context of \emph{reflected} diffusion models,
that is, generative models that operate with reflected diffusions as noising and denoising processes \cite{lou23, fishman23, holk25},
various implementations of the reflection step have been proposed; see, e.g., \cite{fishman24}.
However, replacing the reflection by a projection step, as theoretically supported by \cite{slominski94}, has been ruled out for the computational reasons mentioned above. 
Our proposed stochastic algorithm may therefore provide a new simulation perspective in this setting, offering a general flexible framework for numerical geometric reconstruction
in high dimensions that applies beyond classical structures such as sparsity or manifold structure.
\end{example}

\subsection{Application to image processing}\label{su:bilder}
In this short section, we illustrate Theorems~\ref{t:nn} and \ref{t:nn2} in a canonical statistical setting arising in inverse problems and image reconstruction.

To this end, let $\alpha$ be a probability measure on $L^2((0,1]^2)$, referring to the law of images of interest.
Given an image $x \in L^2((0,1]^2)$, its discretised version at resolution $d = (2^k)^2$ is defined by smoothing over pixel blocks:
\[\Phi_d(x)_{i,j} \coloneq  2^{2k} \int_{(i-1)/2^k}^{i/2^k} \int_{(j-1)/2^k}^{j/2^k} x(u,v) \, du \, dv, \quad i,j = 1, \dots, 2^k,\]
so that $\Phi_d(x) \in \mathbb{R}^{d}$. The pushforward measure $\alpha_d \coloneq  \alpha^{\Phi_d}$ then describes the law of discretised images at resolution $d$.

Fix $y \in L^2((0,1]^2)$, interpreted as a noisy observation. For given $\delta, \varepsilon > 0$, choose $r > 0$ such that
\[\alpha\big(B(y, r)\big) > \varepsilon.\]
Define $r_d \coloneq  \sqrt{d} \, r$, and set $y_d \coloneq  \Phi_d(y) \in \rr^d$. Then, for any $z \in L^2((0,1]^2)$, Jensen's inequality implies that
\[\big\lvert \Phi_d(z) - \Phi_d(y) \big\rvert \leq \sqrt{d} \lVert z -y \rVert,\]
where $\|z\|\coloneq \sqrt{\int_0^1\int_0^1z(u,v)^2dudv}$ denotes the usual $L^2$-norm.
Therefore,
\[B(y,r) \subset (\Phi_d)^{-1}\big(B(\Phi_d(y), \sqrt{d}r )\big) = (\Phi_d)^{-1}\big(B(y_d, r_d) \big).\]
By our choice of $r,\varepsilon > 0$, this yields
\[\alpha_d\big(B(y_d,r_d)\big) = \alpha\big((\Phi_d)^{-1}(B(y_d,r_d) \big) \geq \alpha\big(B(y,r) \big) > \varepsilon.\]
Hence, $\alpha_d,y_d,r_d$ satisfy the assumptions of Theorem~\ref{t:nn} for $\varepsilon$, which thus gives
\[\PP^{Y_\zeta \mid Y_0 = y_d}\big(\mathscr{M}_d \cap B(y_d,(1+\delta)r_d)\big) \geq 1 - \frac{1}{\varepsilon}(1+\delta)^{2-d},\]
where $\mathscr{M}_d$ denotes the support of $\alpha_d$.
As is shown below, the pushforward property implies
\begin{equation}\label{e:bilder}
\lim_{d \to \infty} \alpha_d\bigl(B(y_d, (1+\delta)r_d)\bigr) =
\alpha\bigl(B(y, (1+\delta)r)\bigr),
\end{equation}
so that the preimage under $\Phi_d$ of the Euclidean ball $B(y_d, (1+\delta)r_d) \subset \rr^d$ corresponds to a shrinking neighbourhood of $y$ in the $L^2$-norm.
In particular, $\alpha_d(B(y_d, (1+\delta)r_d))$ can become arbitrarily small even though the posterior concentrates in this set.

To verify \eqref{e:bilder}, consider the probability space
$((0,1]^2,\mathscr B((0,1]^2),\lambda|_{(0,1]^2})$. For $k\in\nn$, let $\mathscr F_k$ denote the $\sigma$-field on $(0,1]^2$ which is
generated by the squares $({(i-1)/2^k},{i/2^k}]\times({(j-1)/2^k},{j/2^k}]$ with $i,j=1,\dots,2^k$.
For $z=(z_{i,j})_{i,j=1,\dots,k}\in\rr^{2^{2k}}$, define a corresponding $\widetilde z\in L^2((0,1]^2)$ via
\[\widetilde z(u,v)\coloneq \sum_{i,j=1,\dots,k} z_{i,j}
\bm{1}_{({i-1\over 2^k},{i\over 2^k}]\times({j-1\over 2^k},{j\over 2^k}]}(u,v),
\quad (u,v)\in(0,1]^2.\]
For fixed $x\in L^2((0,1]^2)$, it is easy to verify that
$\E(x \mid \mathscr F_k)=\widetilde\Phi_{2^{2k}}(x)$ and $\widetilde{\Phi}_{2^{2k}}(\widetilde{\Phi}_{2^{2(k+1)}}(x)) = \widetilde\Phi_{2^{2k}}(x)$,
which implies that $(\widetilde\Phi_{2^{2k}}(x))_{k\in\nn}$ is a martingale relative to the filtration $(\mathscr F_k)_{k\in\nn}$.
By Doob's martingale convergence theorem, it follows that $\widetilde\Phi_{2^{2k}}(x)\to x$ in $L^2((0,1]^2)$ in the sense that
\[\bigl\|\widetilde\Phi_{2^{2k}}(x)-x\bigr\|\to0\ \quad \text{as } k\to\infty.\]
Now we switch to the probability space $(L^2((0,1]^2),\mathscr B(L^2((0,1]^2)),\alpha^y)$,
where $\alpha^y(B) \coloneq\alpha(B+y)$ for measurable $B\subset L^2((0,1]^2)$.
Since pointwise convergence implies convergence in law, the Portmanteau theorem yields
\[\int \bm{1}_B\bigl(\widetilde\Phi_{2^{2k}}(z)\bigr)\,\alpha^y(dz)\stackrel{k\to\infty}{\longrightarrow} \int \bm{1}_B(z)\,\alpha^y(dz)\]
for all continuity sets $B$ of $\alpha^y$.
From
\[\alpha_d\bigl(B(y_d, (1+\delta)r_d)\bigr)
=\int \bm{1}_{B(0,(1+\delta)r)}\bigl(\widetilde\Phi_{2^{2k}}(z)\bigr)\,\alpha^y(dz)\]
and
\[\alpha\bigl(B(y, (1+\delta)r)\bigr)
=\int \bm{1}_{B(0,(1+\delta)r)}(z)\,\alpha^y(dz),\]
we conclude that \eqref{e:bilder} holds provided that
$\alpha^y(\partial B(y,(1+\delta)r))=0$.
This, however, is true for all but at most countably many $\delta>0$.\\

A numerical illustration of the method can be found in Figure \ref{fig:digit-reconstruction}. 
\begin{figure}[ht]
\centering
\includegraphics[width=\linewidth]{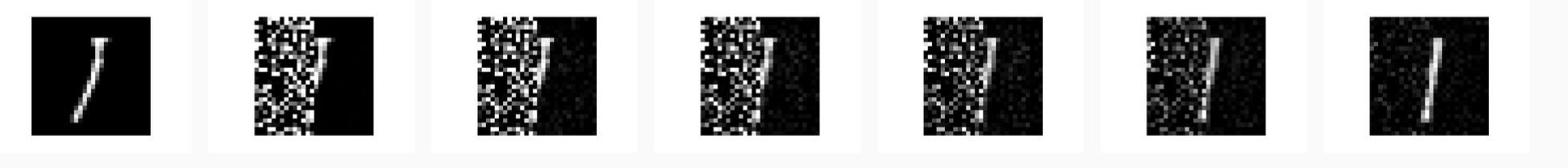}
\caption{Reconstruction of a digit 1 image $y$ from a corrupted input $x$, in which the left half has been replaced with noise.
The model is based on the MNIST dataset. Several intermediate reconstruction steps are shown to illustrate the denoising process.}
\label{fig:digit-reconstruction}
\end{figure}
Based on 60{,}000
images from the MNIST dataset of handwritten digits,
the drift function has been trained using a neural network.
The first two pictures in the row show
an image of the digit 1 before and after replacing the left half with random noise. The reconstructed input is to be found at the right end, followed by some
intermediate steps of the simulated backward diffusion process.

\begin{appendix}
\section{Technical tools}
\begin{lemma}\label{l:cond}
Let $X$ be a random variable with values in some measurable space $(A,\mathscr A)$.
Moreover, let $Y$ be a real-valued random variable such that
$\E(|Y| \mid X)<\infty$ almost surely.
We denote by $f^\star\colon A\to\rr$ the factorisation of the conditional expectation
of $Y$ given $X$, i.e.\
\[f^\star(x) \coloneq \E(Y \mid X=x),\quad x\in A.\]
For some measurable set $B\in\mathscr A$, let $C$ be an event such that
$X^{-1}(B)\subset C$ and $\E(Y^2\bm{1}_C)<\infty$.
Then, $f^\star\bm{1}_B=\widetilde f\bm{1}_B$ holds $\PP^X$-a.s.\ for the (or, more precisely, any)
minimiser $\widetilde f$ of
\[f\mapsto \E\bigl((f(X)-Y)^2\bm{1}_C\bigr)\]
over all measurable functions $f\colon A\to\rr$.
\end{lemma}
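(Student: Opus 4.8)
The plan is to reduce the statement to the standard fact that a conditional expectation is the $L^2$-orthogonal projection onto $\sigma(X)$; the only genuine work is the integrability bookkeeping caused by $Y$ being square-integrable merely on $C$ rather than globally, together with the localisation to $B$.

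First I would record two preliminary observations. The objective $f\mapsto\E\bigl((f(X)-Y)^2\bm{1}_C\bigr)$ takes the finite value $\E(Y^2\bm{1}_C)$ at $f\equiv0$, so every minimiser $\widetilde f$ has finite objective, and it suffices to compare $\widetilde f$ against functions $f$ with $\E\bigl((f(X)-Y)^2\bm{1}_C\bigr)<\infty$. Moreover, since $X^{-1}(B)\subset C$ we may split $\bm{1}_C=\bm{1}_{\{X\in B\}}+\bm{1}_{C\setminus X^{-1}(B)}$, which decouples the objective as
\[\E\bigl((f(X)-Y)^2\bm{1}_C\bigr)
=\E\bigl((f(X)-Y)^2\bm{1}_{\{X\in B\}}\bigr)
+\E\bigl((f(X)-Y)^2\bm{1}_{C\setminus X^{-1}(B)}\bigr),\]
where the second summand depends on $f$ only through its values on $B^{\C}$, because $X\notin B$ there.

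Next I would analyse the first summand. Setting $Z\coloneq Y\bm{1}_{\{X\in B\}}$, we have $Z\in L^2$ since $\bm{1}_{\{X\in B\}}\le\bm{1}_C$, and on $\{X\in B\}$ one has $f(X)=f(X)\bm{1}_{\{X\in B\}}$ and $Y=Z$, so the first summand equals $\E\bigl((f(X)\bm{1}_{\{X\in B\}}-Z)^2\bigr)$. Whenever this is finite, $f(X)\bm{1}_{\{X\in B\}}$ is a $\sigma(X)$-measurable element of $L^2$, and the Pythagorean identity for the projection onto $\sigma(X)$ gives
\[\E\bigl((f(X)\bm{1}_{\{X\in B\}}-Z)^2\bigr)
=\E\bigl((f(X)-f^\star(X))^2\bm{1}_{\{X\in B\}}\bigr)
+\E\bigl((Z-\E(Z\mid X))^2\bigr),\]
using $\E(Z\mid X)=\bm{1}_{\{X\in B\}}\E(Y\mid X)=f^\star(X)\bm{1}_{\{X\in B\}}$; in particular $f^\star(X)\bm{1}_{\{X\in B\}}\in L^2$, so $f=f^\star$ on $B$ is an admissible competitor realising the value $0$ in the first term on the right.

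Combining the two displays, the objective equals $\E\bigl((f(X)-f^\star(X))^2\bm{1}_{\{X\in B\}}\bigr)$ plus a constant plus a term that does not involve $f|_B$; since replacing $f|_B$ by $f^\star|_B$ affects only the first, nonnegative, piece and lowers it to $0$, any minimiser $\widetilde f$ must already satisfy $\E\bigl((\widetilde f(X)-f^\star(X))^2\bm{1}_{\{X\in B\}}\bigr)=0$, i.e.\ $\widetilde f=f^\star$ $\PP^X$-a.s.\ on $B$, which is the assertion. I expect the only delicate point to be the integrability bookkeeping — verifying $Z\in L^2$, that finiteness of the objective forces $f(X)\bm{1}_{\{X\in B\}}\in L^2$ so that the Pythagorean identity applies, and that $f^\star\bm{1}_B$ is itself square-integrable so the value $0$ is attainable — rather than anything structural.
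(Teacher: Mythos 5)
Your proof is correct and, modulo cosmetic packaging, is the same argument the paper gives: both decompose $\bm{1}_C=\bm{1}_{X^{-1}(B)}+\bm{1}_{C\setminus X^{-1}(B)}$ so the objective splits into a term supported on $\{X\in B\}$ plus a term that depends on $f$ only through $f|_{B^{\C}}$, and both then apply $L^2$-orthogonality of conditional expectation to the first piece (you phrase it as the Pythagorean identity via the auxiliary $Z=Y\bm{1}_{\{X\in B\}}$, the paper expands the square directly, and your observation that $\E(Z\mid X)=f^\star(X)\bm{1}_{\{X\in B\}}\in L^2$ corresponds to the paper's implicit integrability reduction).
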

\begin{proof}
If $f(X) \bm{1}_C \notin L^2(\PP)$, we have $\E((f(X)-Y)^2\bm{1}_C) = \infty$ because $Y\bm{1}_C \in L^2(\PP)$.
It therefore suffices to only consider measurable $f\colon A \to \rr$ such that $f(X) \bm{1}_C \in L^2(\PP)$.

\emph{Step 1:}
If $C=X^{-1}(B)$, the claim follows because
\begin{align*}
&\E\bigl((f(X)-Y))^2\bm{1}_C\bigr)\\
&=\E\bigl((f(X)-f^\star(X)+f^\star(X)-Y))^2\bm{1}_C\bigr)\\
&=\E\bigl((f(X)-f^\star(X))^2\bm{1}_C\bigr)
+2\E\bigl(\bigl(f(X)-f^\star(X)\bigr)\bigl(f^\star(X)-\E(Y \mid X)\bigr)\bm{1}_C\bigr)\\
&\qquad+\E\bigl((f^\star(X)-Y))^2\bm{1}_C\bigr)\\
&=\E\bigl((f(X)-f^\star(X))^2\bm{1}_C\bigr)+0
+\E\bigl((f^\star(X)-Y))^2\bm{1}_C\bigr)\\
&\geq \E\bigl((f^\star(X)-Y))^2\bm{1}_C\bigr)
\end{align*}
for any $f$ as above.

\emph{Step 2:}
For general $C$, we have
\[
\E\bigl((f(X)-Y))^2\bm{1}_C\bigr)=\E\Bigl(\bigl(f(X)\bm{1}_B(X)-Y)\bigr)^2\bm{1}_{X^{-1}(B)}\Bigr)
+\E\Bigl(\bigl(f(X)\bm{1}_{B^{\C}}(X)-Y)\bigr)^2\bm{1}_{C\setminus X^{-1}(B)}\Bigr).
\]
The first term on the right is minimised by $f^\star$, as is shown in Step 1.
The second term depends only on $f$ on $B^{\C}$, which can be chosen and hence
minimised independently of $f$ on the set $B$.
\end{proof}

We recall a well-known fact on expected values at independent
exponential times:
\begin{lemma}\label{l:exp}
Suppose that $X$ is a semimartingale (or, more generally, product-measurable as a mapping $(\omega,t)\to X_t(\omega)$ on $\Omega\times\rr_+\to\rr$)
and $\tau$ an independent stopping time
which is exponentially distributed with parameter $\lambda>0$. Then,
\[\E(X_\tau)=\lambda\E\biggl(\int_0^\tau X_tdt\biggr),\]
provided that the integral on either side exists.
\end{lemma}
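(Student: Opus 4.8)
The plan is to evaluate both sides by Fubini's theorem, reducing each of them to the single expression $\lambda\int_0^\infty \E(X_t)\,\e^{-\lambda t}\,dt$, and then to compare.

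First I would exploit the independence of $\tau$ and $X$: the joint law of $(\omega,\tau)$ on $\Omega\times\rr_+$ is the product measure $\PP\otimes\mu_\lambda$, where $\mu_\lambda(dt)=\lambda\e^{-\lambda t}\,dt$ is the law of $\tau$. Since $X$ is assumed product-measurable in $(\omega,t)$ — which for a semimartingale holds automatically, as $t\mapsto X_t(\omega)$ is càdlàg and the process therefore admits a jointly measurable version — the map $(\omega,t)\mapsto X_t(\omega)$ is an admissible integrand against $\PP\otimes\mu_\lambda$, and Fubini gives
\[\E(X_\tau)=\int_\Omega\int_0^\infty X_t(\omega)\,\lambda\e^{-\lambda t}\,dt\,\PP(d\omega)=\lambda\int_0^\infty \E(X_t)\,\e^{-\lambda t}\,dt.\]
For the right-hand side I would rewrite the random upper limit through an indicator, $\int_0^\tau X_t\,dt=\int_0^\infty X_t\bm{1}_{\{t<\tau\}}\,dt$, and apply Fubini once more, now using $\PP(\tau>t)=\e^{-\lambda t}$ together with the independence of $\tau$ and $X_t$:
\[\lambda\,\E\biggl(\int_0^\tau X_t\,dt\biggr)=\lambda\int_0^\infty\E\bigl(X_t\bm{1}_{\{t<\tau\}}\bigr)\,dt=\lambda\int_0^\infty\E(X_t)\,\PP(\tau>t)\,dt=\lambda\int_0^\infty\E(X_t)\,\e^{-\lambda t}\,dt,\]
which is exactly the expression obtained above for $\E(X_\tau)$, so the two sides agree.

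The only delicate point, and the step I expect to need the most care, is the passage from Tonelli to Fubini for a possibly signed integrand. If $X\ge0$, both identities follow directly from Tonelli and hold with values in $[0,\infty]$. In general I would first run the computation above for $|X|$ in place of $X$; both iterated integrals of $|X|$ then equal $\lambda\int_0^\infty\E(|X_t|)\,\e^{-\lambda t}\,dt$, and the hypothesis that one side of the asserted identity is well defined forces this common value to be finite, which licenses Fubini for $X$ itself and yields the claimed equality. I would also note explicitly that the product-measurability assumption is precisely what is needed to make the double integrals meaningful, and that it is never an obstruction for the semimartingales appearing in the main text.
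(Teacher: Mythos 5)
Your proof is correct and follows essentially the same route as the paper: both use independence of $\tau$ and $X$ together with Fubini to reduce each side to the common value $\lambda\int_0^\infty\E(X_t)\,\e^{-\lambda t}\,dt$. The only cosmetic difference is that the paper chains the equalities in one pass starting from $\E(X_\tau)=\E\bigl(\E(X_\tau\mid X)\bigr)$, whereas you meet in the middle and spell out the Tonelli-to-Fubini justification for signed integrands, which the paper leaves implicit.
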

\begin{proof}
The claim follows from
\begin{align*}
\E(X_\tau)
&=\E\bigl(\E(X_\tau|X)\bigr)\\
&=\E\biggl(\int_0^\infty X_t \lambda \e^{-\lambda t}\,dt\biggr)\\
&=\int_0^\infty \E(X_t) \lambda \e^{-\lambda t}\,dt\\
&= \lambda \int_0^\infty \E(X_t) \PP(\tau > t)\,dt\\ 
&= \lambda \int_0^\infty \E(X_t \bm{1}_{\{\tau > t\}})\,dt\\
&= \lambda \E\biggl(\int_0^\tau X_t \, dt \biggr),
\end{align*}
where we used independence of $X$ and $\tau$ and Fubini twice.
\end{proof}
\end{appendix}

\section*{Acknowledgements}
Thanks are due to Maximilian Klein for correcting errors in a preliminary version of the paper.

\printbibliography

\end{document}